\DeclareMathOperator{\diam}{diam}
\title{A non-injective Assouad-type theorem with sharp dimension}
\author{Guy C. David}
\address{Department of Mathematical Sciences\\ Ball State University, Muncie, IN 47306}
\email{gcdavid@bsu.edu}
\date{\today}
\thanks{This work was partially supported by the National Science Foundation under Grant No. DMS-2054004.}
\subjclass[2020]{30L99, 30L05, 51F30}
\begin{document}
\begin{abstract}
    Lipschitz light maps, defined by Cheeger and Kleiner, are a class of non-injective ``foldings'' between metric spaces that preserve some geometric information. We prove that if a metric space $(X,d)$ has Nagata dimension $n$, then its ``snowflakes'' $(X,d^\epsilon)$ admit Lipschitz light maps to $\mathbb{R}^n$ for all $0<\epsilon<1$. This can be seen as an analog of a well-known theorem of Assouad. We also provide an application to a new variant of conformal dimension.
\end{abstract}

\maketitle

\theoremstyle{plain}
\newtheorem{theorem}{Theorem}
\newtheorem{exercise}{Exercise}
\newtheorem{corollary}[theorem]{Corollary}
\newtheorem{scholium}[theorem]{Scholium}
\newtheorem{claim}[theorem]{Claim}
\newtheorem{observation}[theorem]{Observation}
\newtheorem{lemma}[theorem]{Lemma}
\newtheorem{sublemma}[theorem]{Lemma}
\newtheorem{proposition}[theorem]{Proposition}
\newtheorem{conjecture}{theorem}
\newtheorem{maintheorem}{Theorem}
\newtheorem{maincor}[maintheorem]{Corollary}
\renewcommand{\themaintheorem}{\Alph{maintheorem}}

\theoremstyle{definition}
\newtheorem{fact}[theorem]{Fact}
\newtheorem{example}[theorem]{Example}
\newtheorem{definition}[theorem]{Definition}
\newtheorem{remark}[theorem]{Remark}
\newtheorem{question}[theorem]{Question}

\numberwithin{equation}{section}
\numberwithin{theorem}{section}

\newcommand{\obar}[1]{\overline{#1}}
\newcommand{\haus}[1]{\mathcal{H}^n(#1)}
\newcommand{\prob}{\mathbb{P}}
\newcommand{\Tan}{\text{Tan}}
\newcommand{\WTan}{\text{WTan}}
\newcommand{\CTan}{\text{CTan}}
\newcommand{\CWTan}{\text{CWTan}}
\newcommand{\LIP}{\text{LIP}}
\newcommand{\dist}{\text{dist}}
\newcommand{\cdim}{\text{cdim}}
\newcommand{\bcdim}{\text{bcdim}}
\newcommand{\RR}{\mathbb{R}}
\newcommand{\HH}{\mathcal{H}}
\newcommand{\B}{\mathcal{B}}
\newcommand{\ZZ}{\mathbb{Z}}
\newcommand{\bH}{\mathbb{H}}
\newcommand{\G}{\mathbb{G}}

\section{Introduction}

A basic line of research in metric geometry is the following: Given an abstract metric space, when can one embed, fold, or otherwise map it into a Euclidean space without too much distortion of the geometry? One well-known instance of this question is the bi-Lipschitz embedding problem: Given a metric space, when is there an embedding into some Euclidean space $\mathbb{R}^N$ that preserves all distances up to a constant factor?

There appear to be no simple necessary and sufficient conditions here. A rather obvious necessary condition is that $X$ must have finite \textit{Assouad dimension} $\dim_A$. (See Section \ref{sec:prelim} for a definition.) An important theorem of Assouad says that this condition is sufficient, \textit{if} one is willing to first raise the metric to a power less than one:
\begin{theorem}[Assouad, Proposition 2.6 of \cite{Assouad}]\label{thm:Assouad}
Let $(X,d)$ be a metric space of finite Assouad dimension and $\epsilon\in(0,1)$. Then there is a Euclidean space $\mathbb{R}^N$ and a bi-Lipschitz embedding of the metric space $(X,d^\epsilon)$ into $\mathbb{R}^N$. The dimension $N$ and the distortion of the embedding can be chosen to depend only on $\epsilon$ and the constants in the Assouad dimension of $X$.

\end{theorem}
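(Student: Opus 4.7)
The plan is to build the bi-Lipschitz embedding of $(X, d^\epsilon)$ into $\RR^N$ from a sum, indexed over colored multiscale nets, of Lipschitz bump functions whose heights and widths are tuned to the snowflake exponent. Since finite Assouad dimension is equivalent to the doubling condition, I would first fix for each $k \in \ZZ$ a maximal $2^{-k}$-separated subset $N_k \subset X$. Doubling ensures that, for any fixed $A > 0$, the ``proximity graph'' on $N_k$ joining pairs at distance less than $A \cdot 2^{-k}$ has uniformly bounded degree, so a greedy coloring argument produces a coloring $\chi_k \colon N_k \to \{1, \ldots, K\}$, with $K$ depending only on $A$ and the Assouad constants, such that any two same-color points of $N_k$ lie at distance at least $A \cdot 2^{-k}$. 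I will fix $A$ large at the end.

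With these ingredients, for $k \in \ZZ$ and $c \in \{1, \ldots, K\}$ I define
$$\phi_{k,c}(x) = 2^{-k\epsilon} \max\{(1 - 2^k d(x,p)/R)_+ : p \in N_k,\ \chi_k(p) = c\},$$
where $R$ is a small constant; provided $R < A/2$, at most one $p$ contributes nontrivially at any $x$. Each $\phi_{k,c}$ is then bounded by $2^{-k\epsilon}$ and Lipschitz (with respect to $d$) with constant $O(2^{k(1-\epsilon)})$. Choosing a large integer $L = L(\epsilon)$ to be specified, I define $f \colon X \to \RR^{LK}$ by
$$F_{j,c}(x) = \sum_{k \equiv j \pmod{L}} \phi_{k,c}(x), \qquad 0 \le j < L,\ 1 \le c \le K.$$

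The Lipschitz estimate for each $F_{j,c}$ on $(X, d^\epsilon)$ is routine: splitting the sum at the scale $k_0$ with $2^{-k_0} \approx d(x,y)$ and using the $d$-Lipschitz bound for $k \le k_0$ and the sup-norm bound for $k > k_0$, both tails form geometric series dominated by $d(x,y)^\epsilon$. The lower bound, which I expect to be the main obstacle, requires a careful choice of a ``resonant'' scale. Given $x \neq y$, I would choose $k_*$ so that $2^{-k_*}$ lies in a fixed narrow window around $d(x,y)$, take any $p \in N_{k_*}$ with $d(x,p) \le 2^{-k_*}$, and verify, using that $A$ is much larger than both $R$ and the window width, that $y$ lies outside the bump of \emph{every} $\chi_{k_*}(p)$-colored net point at scale $k_*$. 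This will give $\phi_{k_*, \chi_{k_*}(p)}(x) \gtrsim 2^{-k_* \epsilon}$ and $\phi_{k_*, \chi_{k_*}(p)}(y) = 0$. The contributions from other scales $k \equiv k_* \pmod L$, $k \neq k_*$, within the same coordinate $F_{j_*, \chi_{k_*}(p)}$ are controlled by the same split and total at most $O(2^{-L(1-\epsilon)} + 2^{-L\epsilon}) \cdot d(x,y)^\epsilon$. Choosing $L = L(\epsilon)$ large enough absorbs this error into the main term, yielding a lower bound $\gtrsim d(x,y)^\epsilon$ on a single coordinate. The dimension $N = LK$ and the distortion then depend only on $\epsilon$ and the Assouad dimension of $X$, as required.
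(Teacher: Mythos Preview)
The paper does not prove Theorem~\ref{thm:Assouad}; it is quoted as Assouad's classical result, with \cite[Chapter 12]{Heinonen} cited for a presentation. Your sketch is exactly that classical proof (colored doubling nets, scale-indexed tent functions, residue-class separation of scales to isolate a resonant term), and the outline is correct.

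Two points to tighten. First, as written $F_{j,c}(x)=\sum_{k\equiv j}\phi_{k,c}(x)$ need not converge: for $k\to -\infty$ the bump values are of order $2^{-k\epsilon}\to\infty$. Only the \emph{differences} $F_{j,c}(x)-F_{j,c}(y)$ are summable. The standard fix is to subtract a basepoint, exactly as the paper does in \eqref{eq:fdef} for its analogous construction. Second, calling $R$ ``small'' is off: since the nearest net point $p\in N_{k_*}$ may sit at distance up to $2^{-k_*}$ from $x$, you need $R$ somewhat larger than $1$ (say $R=2$) for $\phi_{k_*,c}(x)\gtrsim 2^{-k_*\epsilon}$; then $A$ must be large relative to $R$ and the window constant, which you do say. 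With those two adjustments the argument goes through.

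For comparison, the paper's proof of its own main result (Theorem~\ref{thm:main}) follows the same template but replaces doubling nets by Nagata coverings and, rather than separating scales via residue classes modulo $L$, simply takes the scale ratio $r$ large (see~\eqref{eq:rdef}). The paper remarks that in Assouad's original argument this scale separation is not needed; your residue-class device is a common variant that serves the same purpose.
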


The metric spaces $(X,d^\epsilon)$, for $\epsilon\in (0,1)$, are called ``snowflakes''. This snowflaking is necessary in Assouad's theorem: there are metric spaces with finite Assouad dimension that have no bi-Lipschitz embedding into any Euclidean space, the most famous being the Heisenberg group \cite[Theorem 7.1]{Semmes}. 

In this paper, we are concerned with a more general class of metric spaces than those in Assouad's theorem. These will be defined via a different notion of dimension, the so-called \textit{Nagata dimension}. The Nagata dimension $\dim_N(X)$ of a metric space can be viewed as a quantification of the purely topological Lebesgue covering dimension $\dim_T(X)$, the  minimal $n\in\mathbb{N}\cup\{0\}$ such that every finite open cover of $X$ admits a refinement of multiplicity at most $n+1$. To define the Nagata dimension, we first declare that a collection of subsets of a metric space $X$ is \textit{$D$-bounded} if each set in the collection has diameter at most $D$. For $s>0$, the \textit{$s$-multiplicity} of a collection of subsets is the minimal $n$ such that every subset of $X$ with diameter at most $s$ intersects at most $n$ members of the collection.

\begin{definition}\label{def:nagata}
The \textit{Nagata dimension} of a metric space $X$, denoted $\dim_N (X)$, is the minimal integer $n$ with the following property: there exists $c>0$ such that, for all $s>0$, $X$ has a $cs$-bounded covering with $s$-multiplicity at most $n+1$.
\end{definition}
The Nagata dimension has turned out to be a very useful quantity to consider for many problems in Lipschitz and quasisymmetric geometry, and a thorough introduction to its properties can be found in \cite{LS}.

In general, \cite[Theorem 1.1]{LeDonneRajala} we have the inequality 
\begin{equation}\label{eq:nagassouad}
\dim_N(X) \leq \dim_A(X) \text{ for all metric spaces } X,
\end{equation}
This inequality is often strict, and it is not even difficult to construct metric spaces of finite Nagata dimension and infinite Assouad dimension.

Assouad's theorem thus no longer generally applies to spaces of finite Nagata dimension, so we may go back to the question at the start of the introduction and ask whether these spaces admit maps to Euclidean space that preserve some geometric information. Our approach is to toss out the injectivity requirement on our mappings, and try to find a way of quantitatively ``folding'', rather than embedding, such spaces into Euclidean space. A class of folding maps that are not necessarily injective but preserve some geometric information at all scales are the so-called ``Lipschitz light'' maps defined by Cheeger and Kleiner in \cite{CK13_inverse}. To define them, we first need to discuss ``$r$-paths'' and related notions:

\begin{definition}
Given $r>0$, a finite sequence $(x_1, \dots, x_k)$ in a metric space is an \textit{$r$-path} if $d(x_i, x_{i+1})\leq r$ for each $i\in\{1, \dots, k-1\}$. 

We say that two points $x,y$ in a metric space $X$ are in the same \textit{$r$-component} of $X$ if there is an $r$-path in $X$ joining them, i.e., an $r$-path $(x_1, \dots, x_k)$ in $X$ with $x=x_1$ and $y=x_k$. For each $r>0$, the notion of $r$-components defines an equivalence relation on $X$.

Lastly, we say that a set is \textit{$r$-connected} if it consists of a single $r$-component, i.e., if every pair of points in it can be joined by an $r$-path.
\end{definition}

\begin{definition}[Cheeger--Kleiner \cite{CK13_inverse}]\label{def:LL2}
A map $f:X\rightarrow Y$ between metric spaces is \textit{Lipschitz light} if there is a constant $C>0$ such that 
\begin{itemize}
\item $f$ is Lipschitz with constant $C$, and
\item for every $r>0$ and every subset $W\subseteq Y$ with $\diam(W)\leq r$, the $r$-components of $f^{-1}(W)$ have diameter at most $Cr$.
\end{itemize}
\end{definition}

Lipschitz light mappings are (topologically) light, so
they cannot collapse any non-trivial continua (compact, connected sets) to points. In fact, Lipschitz light maps preserve more quantitative information. A straightforward rephrasing of Definition \ref{def:LL2} is that $f$ is Lipschitz light if and only if there is a constant $c>0$ such that, for all $r>0$,
$$ c^{-1}\diam(K) - r \leq \diam(f(K)) \leq  c\diam(K) \text{ for all $r$-connected sets } K\subseteq X. $$
Thus, Lipschitz light maps roughly preserve the diameter of coarsely connected sets. In particular, Lipschitz light maps preserve the diameter of continua up to a constant multiplicative factor:
\begin{equation}\label{eq:diam}
 \diam(f(K)) \approx \diam(K) \text{ for all continua } K\subseteq X.
\end{equation}
We therefore consider a Lipschitz light mapping as a way of ``folding'' a metric space that preserves some geometric information.

Our main theorem is then an analog of Assouad's theorem (Theorem \ref{thm:Assouad}) in which Assouad dimension is replaced by Nagata dimension and bi-Lipschitz embeddings are replaced by Lipschitz light maps. Thus, we construct a weaker class of mappings than the embeddings in Assouad's theorem, but for a wider class of spaces. As an important difference, however, we also obtain the sharp dimension of the receiving Euclidean space, and so our result is of interest even for spaces of finite Assouad dimension.

\begin{theorem}\label{thm:main}
Let $(X,d)$ be a metric space of finite Nagata dimension $n\in\mathbb{N}\cup\{0\}$, and let $\epsilon\in(0,1)$. Then there is a Lipschitz light map from $(X,d^\epsilon)$ into $\mathbb{R}^n$. The Lipschitz light constant of this mapping can be chosen to depend only on $n$, $\epsilon$, and the constant $c$ in Definition \ref{def:nagata}.

Moreover, the number $n=\dim_N(X)$ is the minimal integer for which such a map exists.
\end{theorem}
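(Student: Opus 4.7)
The plan is to establish both the existence of a Lipschitz light map $F\colon(X,d^{\epsilon})\to\mathbb{R}^n$ and the sharpness of $n=\dim_N(X)$. For the existence I would build $F$ as a multi-scale weighted sum of partition-of-unity maps adapted to Nagata-dimensional covers of $X$. The crucial feature enabling the target to be exactly $\mathbb{R}^n$, rather than the $\mathbb{R}^{n+1}$ one obtains in a naive Assouad-type construction, is that the $n+1$ color classes into which a Nagata cover decomposes can be encoded as the $n+1$ affinely independent vertices of an $n$-simplex inside $\mathbb{R}^n$. For the sharpness assertion I would invoke snowflake invariance of Nagata dimension together with the fact that Lipschitz light maps do not decrease the Nagata dimension of their source.

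\textbf{Construction.} Fix $\lambda\in(0,1)$. For each $k\in\mathbb{Z}$, Definition~\ref{def:nagata} supplies a $c\lambda^{k}$-bounded cover $\mathcal{B}_{k}$ of $X$ with $\lambda^{k}$-multiplicity at most $n+1$. A standard coloring argument for Nagata covers (see \cite{LS}) partitions $\mathcal{B}_{k}=\mathcal{B}_{k}^{0}\sqcup\cdots\sqcup\mathcal{B}_{k}^{n}$ so that each class is pairwise $\lambda^{k}$-separated. Fix $n+1$ vertices $v_{0},\ldots,v_{n}$ of a regular $n$-simplex in $\mathbb{R}^n$ and, with $\psi_{B}^{k}(x)=\max\{0,1-\lambda^{-k}\dist(x,B)\}$, define scale maps
\[
f_{k}(x)=\Bigl(\sum_{B\in\mathcal{B}_{k}}\psi_{B}^{k}(x)\Bigr)^{-1}\sum_{B\in\mathcal{B}_{k}}\psi_{B}^{k}(x)\,v_{c(B)}\in\mathbb{R}^n.
\]
Each $f_{k}$ is $O(\lambda^{-k})$-Lipschitz with image in the simplex. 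Fix $x_{0}\in X$ and set
\[
F(x)=\sum_{k\in\mathbb{Z}}\lambda^{k\epsilon}\bigl(f_{k}(x)-f_{k}(x_{0})\bigr);
\]
the sum converges because terms with $k$ very negative vanish (eventually $x$ and $x_{0}$ share a cover element) while for $k$ very positive the weights $\lambda^{k\epsilon}$ decay geometrically and $|f_{k}(x)-f_{k}(x_{0})|$ is uniformly bounded.

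\textbf{Verification.} The Lipschitz bound $|F(x)-F(y)|\lesssim d(x,y)^{\epsilon}$ will follow from the familiar Assouad dichotomy: writing $k^{*}$ for the critical index with $\lambda^{k^{*}}\approx d(x,y)$, scales $k\leq k^{*}$ contribute $\lesssim d(x,y)\lambda^{k(\epsilon-1)}$ and scales $k>k^{*}$ contribute $\lesssim\lambda^{k\epsilon}$, both summing geometrically to $O(d(x,y)^{\epsilon})$. For the light condition, given $r>0$, $W\subseteq\mathbb{R}^n$ with $\diam(W)\leq r$, and an $r$-path in $F^{-1}(W)\subseteq(X,d^{\epsilon})$, choose $k^{*}$ with $\lambda^{k^{*}\epsilon}\approx r$, so consecutive points of the path lie within $d$-distance $\lesssim\lambda^{k^{*}}$. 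The $\lambda^{k^{*}}$-separation within each color class forces consecutive path points to lie in overlapping members of $\mathcal{B}_{k^{*}}$. The crux, and the main obstacle, is to argue that confinement of $F$-values to $W$ bounds the number of distinct cover elements the path visits: if too many distinct simplex vertices $v_{c}$ were realized by $f_{k^{*}}$ along the path, the scale-$k^{*}$ contribution $\lambda^{k^{*}\epsilon}f_{k^{*}}$ would spread the $F$-image beyond $W$, and the other scales cannot cancel this spread, since coarse scales vary slowly on a short path while fine scales are uniformly small in magnitude. Granted this cross-scale comparison (which exploits the affine independence of $v_{0},\ldots,v_{n}$ in $\mathbb{R}^n$), the path is contained in a union of $O(1)$ members of $\mathcal{B}_{k^{*}}$, hence has $d$-diameter $\lesssim\lambda^{k^{*}}\approx r^{1/\epsilon}$ and $d^{\epsilon}$-diameter $\lesssim r$.

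\textbf{Lower bound.} For minimality, first note that Nagata dimension is a snowflake invariant: any $cs$-bounded cover of $(X,d)$ with $s$-multiplicity $\leq n+1$ is automatically a $c^{\epsilon}s^{\epsilon}$-bounded cover of $(X,d^{\epsilon})$ with $s^{\epsilon}$-multiplicity $\leq n+1$, and conversely, so $\dim_N(X,d^{\epsilon})=n$. Next, any Lipschitz light map $g\colon X'\to\mathbb{R}^m$ satisfies $\dim_N(X')\leq m$: given $s>0$, pull back a Nagata cover of $\mathbb{R}^m$ at a scale comparable to $s$ divided by the Lipschitz and light constants of $g$, decompose each preimage into $s$-path components (which have diameter $\lesssim s$ by the Lipschitz light property), and observe that the Lipschitz constant of $g$ together with the $(m+1)$-multiplicity of the $\mathbb{R}^m$ cover yields the required multiplicity bound on the resulting cover of $X'$. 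Applying this to $F$ gives $n\leq m$, so $n$ is optimal.
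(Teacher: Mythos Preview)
Your overall architecture matches the paper's: a multi-scale sum of color-coded bump maps, the Assouad-style split for the Lipschitz bound, and the snowflake-invariance/non-decrease-under-Lipschitz-light argument for minimality. The minimality argument is fine. The gap is in the lightness verification.

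Your proposed mechanism---``if too many distinct simplex vertices $v_c$ were realized by $f_{k^*}$ along the path, the scale-$k^*$ contribution would spread $F$ beyond $W$''---does not do what you need. There are only $n+1$ vertices, so that count is trivially bounded; the issue is the number of \emph{cover elements} visited, which is unrelated. A long $r$-path can thread through arbitrarily many elements of $\mathcal{B}_{k^*}$ while $f_{k^*}$ merely oscillates between two vertex values (alternating colors), so the scale-$k^*$ contribution to $F$ stays bounded and your spread argument gives nothing. Your fallback---``coarse scales vary slowly on a short path''---is circular, since shortness of the path is exactly the conclusion at stake; for a putatively long path the coarse-scale terms can vary substantially and there is no a priori reason they should fail to cancel the $k^*$-term.

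The paper's argument supplies the missing idea. It uses the Lang--Schlichenmaier covers (Proposition~\ref{prop:cover}/Lemma~\ref{lem:cover}), which guarantee that every point lies \emph{deep} inside some cover element, so some $\psi_B(x)=1$. It takes $e_0=0$ (the color-$0$ direction contributes nothing to the map), finds a point $x$ on the path lying outside every color-$0$ set---possible because color-$0$ sets are $\tfrac12 r^{j_0}$-separated while the path is assumed long---and then picks $k\in\{1,\dots,n\}$ with $\sum_{B\in\mathcal B^{j_0}_k}\psi_B(x)=1$. The key Lemma~\ref{lem:bound} compares only the $k$th coordinate of $f(x)-f(y)$, for $y$ near $x$ on the path, and shows that unless $y$ also lies in some color-$k$ set the $j_0$-term alone forces $|f(x)-f(y)|>s$; the other scales are controlled because (ii) of that lemma bounds $d(x,y)$ a priori by $2cr^{j_0}$, not by the (unknown) diameter of the path. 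Separation of color-$k$ sets then pins the relevant sub-path to a \emph{single} element of $\mathcal B^{j_0}_k$, contradicting its chosen size. Your normalized simplex-valued $f_k$ loses the ``$\psi_B(x)=1$ in a single coordinate'' feature that drives this comparison, so even with the right covers you would need to rework the estimate; as written, the lightness step is not established.
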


By observation \eqref{eq:diam} above, the Lipschitz light mapping constructed in Theorem \ref{thm:main} has in particular the property that 
\begin{equation}\label{eq:expand}
\diam_{\RR^n}(f(K)) \approx \diam_{(X,d)}(K)^\epsilon \text{ for all continua } K \subseteq (X,d).
\end{equation}

Assouad's Theorem \ref{thm:Assouad} does not hold for $\epsilon=1$, as noted above. Similarly, one cannot take $\epsilon=1$ in Theorem \ref{thm:main}.  Indeed, \cite[Theorem 6.8]{GCD} shows that the Heisenberg group, for example, does not admit a Lipschitz light map into any Euclidean space, while it has Nagata dimension $3$ by \cite[Theorem 4.2]{LeDonneRajala}. Theorem \ref{thm:main} implies that one can ``snowflake and fold'' the Heisenberg group into $\RR^3$. This is rather counter-intuitive: this map from the Heisenberg group to $\RR^3$ \textit{reduces} the overall Hausdorff and Assouad dimensions (from $4$ to $3$) while greatly \textit{expanding} the diameter of all small continua by \eqref{eq:expand}

The parallel between Theorem \ref{thm:main} and Theorem \ref{thm:Assouad} extends to the proofs. As in Assouad's proof, we construct our map by summing certain localized ``bump functions'' over all locations and scales. Differences arise when we consider the way in which these localized functions interact to prove the Lipschitz lightness (in Lemma \ref{lem:bound}). This forces us to work at a sufficiently ``well-separated'' sequence of scales $\{r^j\}_{j\in\mathbb{Z}}$ (see \eqref{eq:rdef}) whereas in Assouad's argument this is not required. We refer to \cite[Chapter 12]{Heinonen} for a presentation of Assouad's argument that influenced our presentation below.

We now present a few corollaries of Theorem \ref{thm:main}.

\subsection{Dimension-theoretic considerations}
A classical result about the topological dimension $\dim_T$ is that (for compact metric spaces), it can also be viewed through (topologically) light maps to Euclidean space. Recall that a continuous map is called light if $f^{-1}(p)$ is totally disconnected for each $p$ in the image of $f$. If $X$ is a compact metric space, then
\begin{equation}\label{eq:toplight}
\dim_T(X) = \min\{ n\geq 0 : \exists f\colon X \rightarrow \RR^n \text{ light}\}.
\end{equation}
(See \cite[Theorems III.6 and III.10]{Nagata}.)

This motivated Cheeger and Kleiner to propose the following new dimension for metric spaces:
\begin{definition}[Cheeger--Kleiner \cite{CK13_inverse}]\label{def:lipdim}
A metric space $X$ has \textit{Lipschitz dimension} $\leq n$ if there is a Lipschitz light map $f:X\rightarrow \mathbb{R}^n$.

We let the \textit{Lipschitz dimension of $X$} be the minimal $n\in\mathbb{N}\cup\{0\}$ such that $X$ has Lipschitz dimension $\leq n$, and denote this by $\dim_L(X)$. If $X$ admits no Lipschitz light map into any Euclidean space, we write $\dim_L(X) = \infty$.
\end{definition}

Cheeger and Kleiner \cite{CK13_inverse} show that spaces of finite Lipschitz dimension can be represented as certain inverse limits, and furthermore that spaces of Lipschitz dimension $1$ admit bi-Lipschitz embeddings into the Banach space $L_1$. Further properties of Lipschitz dimension are explored in \cite{GCD, Freeman, FG1, FG2}.

It is already perhaps surprising, in view of \eqref{eq:toplight} in the topological realm, that the Lipschitz and Nagata dimensions differ. (For the Heisenberg group $H$, $\dim_N(H)=3$ and $\dim_L(H)=\infty$, as noted above.) However, we do always have the inequalities
\begin{equation}\label{eq:topnaglip}
\dim_T(X) \leq \dim_N(X) \leq \dim_L(X)
\end{equation}
by \cite[Theorem 2.2]{LS} and \cite[Corollary 3.5]{GCD}.

In view of the results of \cite{CK13_inverse} and the general interest in bi-Lipschitz invariants of metric spaces, it is worth understanding further the metric spaces of finite Lipschitz dimension. A rephrasing of our main theorem says the following:
\begin{corollary}\label{thm:snowflake}
Suppose that a metric space $(X,d)$ has Nagata dimension $n\in\mathbb{N}\cup\{0,\infty\}$. Then for each $\epsilon\in (0,1)$, the metric space $(X,d^\epsilon)$ has Lipschitz dimension $n$.
\end{corollary}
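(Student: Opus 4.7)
The proof is essentially a direct consequence of Theorem \ref{thm:main} combined with the snowflake-invariance of Nagata dimension and the sandwich inequality \eqref{eq:topnaglip}. Let me outline the plan.

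First, I would establish the auxiliary fact that Nagata dimension is invariant under snowflaking: $\dim_N((X,d^\epsilon)) = \dim_N((X,d))$ for every $\epsilon \in (0,1)$. This is a standard computation from Definition \ref{def:nagata}. Given a $ct$-bounded covering of $(X,d)$ with $t$-multiplicity $\leq n+1$, I would verify that the same collection, viewed in $(X,d^\epsilon)$, is $c^\epsilon s$-bounded with $s$-multiplicity $\leq n+1$ when $s = t^\epsilon$; this works because $\diam_{d^\epsilon} = (\diam_d)^\epsilon$ on any set, and a set of $d^\epsilon$-diameter $\leq s$ has $d$-diameter $\leq s^{1/\epsilon} = t$. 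The reverse substitution $t = s^\epsilon$ gives the other inequality, so the two dimensions agree.

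Next, assuming $\dim_N(X,d) = n$ is finite, Theorem \ref{thm:main} immediately produces a Lipschitz light map $(X,d^\epsilon) \to \RR^n$, whence $\dim_L((X,d^\epsilon)) \leq n$ by Definition \ref{def:lipdim}. For the matching lower bound, I would apply the chain of inequalities \eqref{eq:topnaglip} to the metric space $(X,d^\epsilon)$:
\[
\dim_N((X,d^\epsilon)) \leq \dim_L((X,d^\epsilon)),
\]
and then substitute the snowflake-invariance $\dim_N((X,d^\epsilon)) = n$ established above. This gives $\dim_L((X,d^\epsilon)) \geq n$, hence equality.

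Finally, for the case $n = \infty$, Theorem \ref{thm:main} does not directly apply, but the inequality chain $\dim_L((X,d^\epsilon)) \geq \dim_N((X,d^\epsilon)) = \dim_N(X,d) = \infty$ still forces $\dim_L((X,d^\epsilon)) = \infty$, as needed. There is no real obstacle here; the only step requiring care is the snowflake-invariance of $\dim_N$, which is a routine diameter rescaling but must be checked so that the "moreover" (sharpness) clause of Theorem \ref{thm:main} can be transferred from the datum $\dim_N(X,d)$ to the object $(X,d^\epsilon)$ whose Lipschitz dimension we are computing.
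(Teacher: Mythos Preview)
Your proposal is correct and mirrors the paper's reasoning: the paper presents this corollary as a direct rephrasing of Theorem~\ref{thm:main}, whose ``moreover'' clause is itself proved via exactly the chain $\dim_L(X^\epsilon)\geq \dim_N(X^\epsilon)=\dim_N(X)$ that you write out (see \eqref{eq:infinite}). The only cosmetic difference is that the paper obtains $\dim_N(X^\epsilon)=\dim_N(X)$ by citing the general quasisymmetric invariance of Nagata dimension \cite[Theorem 1.2]{LS}, whereas you verify it by a direct rescaling computation; both are fine.
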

It follows from this corollary and \cite[Theorem 1.7]{CK13_inverse} that each snowflake of a metric space of Nagata dimension $1$ admits a bi-Lipschitz embedding to $L_1(Z,\mu)$ for some measure space $(Z,\mu)$.  

As another direct corollary of Theorem \ref{thm:snowflake}, we answer \cite[Question 8.5]{GCD}:
\begin{corollary}\label{cor:increase}
Snowflaking cannot increase the Lipschitz dimension of a metric space. In other words, $\dim_L(X,d^\epsilon) \leq \dim_L(X,d)$ for every metric space $(X,d)$ and every $\epsilon\in (0,1)$.
\end{corollary}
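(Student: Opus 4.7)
The plan is to derive this corollary as an immediate consequence of Corollary \ref{thm:snowflake} combined with the Nagata--Lipschitz inequality \eqref{eq:topnaglip}. No new construction is required; the corollary is a bookkeeping statement about how those two facts fit together.

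First I would dispose of the case $\dim_L(X,d)=\infty$, for which the inequality is vacuous. Assuming $\dim_L(X,d)$ is finite, inequality \eqref{eq:topnaglip} forces $\dim_N(X,d)\leq \dim_L(X,d)<\infty$. Then I would apply Corollary \ref{thm:snowflake} to $(X,d)$, which identifies the Lipschitz dimension of the snowflake as exactly $\dim_N(X,d)$. Chaining these yields
$$\dim_L(X,d^\epsilon) \;=\; \dim_N(X,d) \;\leq\; \dim_L(X,d),$$
which is the desired inequality.

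I do not anticipate any obstacle: the substantive content has already been absorbed into Theorem \ref{thm:main} (and hence Corollary \ref{thm:snowflake}), so this reduces to a two-line argument. The only mild point of care is to interpret the chain of (in)equalities consistently when either $\dim_N(X,d)$ or $\dim_L(X,d)$ takes the value $\infty$, which is immediate from the extended-integer conventions built into Definitions \ref{def:nagata} and \ref{def:lipdim}. It is worth remarking that the argument is sharp in the sense that equality can hold trivially (e.g., for $(X,d)$ already of finite Lipschitz dimension equal to its Nagata dimension) but can also be strict (e.g., the Heisenberg group, for which the left side is finite by Theorem \ref{thm:main} while the right side is $\infty$).
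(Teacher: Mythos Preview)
Your argument is correct and is essentially identical to the paper's: both chain Corollary~\ref{thm:snowflake} with the inequality $\dim_N(X)\leq\dim_L(X)$ from \eqref{eq:topnaglip} to obtain $\dim_L(X,d^\epsilon)=\dim_N(X,d)\leq\dim_L(X,d)$. The paper's proof is a single displayed line and does not separately treat the infinite case (since Corollary~\ref{thm:snowflake} already allows $n=\infty$), but otherwise there is no difference.
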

\begin{proof}
Let $X$ be a metric space and $\epsilon\in(0,1)$. By Corollary \ref{thm:snowflake} and \eqref{eq:topnaglip}, we have
$$ \dim_L(X,d^\epsilon) = \dim_N(X,d) \leq \dim_L(X,d).$$
\end{proof}

\subsection{``Branched conformal'' dimension}
We now discuss an application of Theorem \ref{thm:main} to a variation on the well-studied notion of \textit{conformal dimension}.

\subsubsection{Quasisymmetries and branched quasisymmetries}
The snowflaking transformation $(X,d) \mapsto (X,d^\epsilon)$ is a special case of a class of mappings called quasisymmetric mappings. These arose in classical conformal mapping theory and now play a major role in analysis on metric spaces, hyperbolic geometry, and geometric group theory. 

\begin{definition}\label{def:bqs}
An embedding $f\colon X \rightarrow Y$ between metric spaces is called \textit{quasisymmetric} (or a \textit{quasisymmetry}) if there is a homeomorphism $\eta\colon [0,\infty) \rightarrow [0,\infty)$ such that
$$ d(f(x), f(a)) \leq \eta\left( \frac{d(x,a)}{d(x,b)} \right) d(f(x),f(b))$$
for all triples $a,b,x$ of distinct points in $X$.

If the embedding is surjective, we say that $X$ and $Y$ are quasisymmetric.
\end{definition}
In other words, quasisymmetric maps are homeomorphisms that distort \textit{ratios} of distances in a controlled manner. There is now a vast theory of quasisymmetric mappings in general metric spaces; we refer the interested reader to \cite{Heinonen} for background, but we will not really need it here. The most salient facts for us are that Nagata dimension is a quasisymmetric invariant (see \cite[Theorem 1.2]{LS}), but Assouad and Lipschitz dimension are not. (See \cite{MackayTyson} for the former and \cite[Corollary 8.4]{GCD} for the latter.)

More recently, there has been interest in analogs of quasisymmetric mappings that need not be injective, somewhat like the relationship between holomorphic functions and conformal mappings in classical complex analysis. To this end, Guo and Williams \cite{GuoWilliams} defined and studied ``branched quasisymmetric'' mappings. To define them, we first recall that a \textit{continuum} in a metric space is a compact, connected set. If the continuum has at least two points, we call it \textit{non-trivial}. 

\begin{definition}
A continuous mapping $f\colon X\rightarrow Y$ is called \textit{branched quasisymmetric} (or a \textit{branched quasisymmetry}) if there is a homeomorphism $\eta\colon [0,\infty) \rightarrow [0,\infty)$ such that
$$\diam(f(E)) \leq \eta\left(\frac{\diam(E)}{\diam(E')}\right) \diam(f(E)),$$
for all pairs of intersecting non-trivial continua $E$ and $E'$ in $X$.
\end{definition}
Thus, branched quasisymmetries are continuous maps that distort ratios of diameters of intersecting continua in a controlled manner, but they need not be injective. (We remark that in \cite{GuoWilliams}, but not in \cite{LindquistPankka}, branched quasisymmetries are required also to be discrete open mappings. We do not require this.) A simple argument shows that quasisymmetric embeddings are automatically branched quasisymmetric.

The definition of branched quasisymmetry is rather vacuous if $X$ has no continua. Thus, the natural setting for studying branched quasisymmetries is not arbitrary metric spaces, but those that are equipped with many continua: the so-called ``bounded turning'' (or ``linearly connected'') spaces. A metric space is called \textit{bounded turning} if every pair of points can be joined by a continuum whose diameter is comparable to the distance between the points.

\subsubsection{Conformal and ``branched conformal'' dimension}
Assouad dimension is not a quasisymmetric invariant, but the \textit{conformal Assouad dimension} is. This quantity for a metric space $X$ is defined as
\begin{equation}\label{eq:confdim}
\text{confdim}_A(X) = \inf\{ \dim_A(Y): Y\text{ is quasisymmetric  to } X\}.
\end{equation}
Conformal Assouad dimension (along with its variations) has played a major role in geometric group theory and hyperbolic geometry; we refer to \cite{MackayTyson} for an overview. Computing this quantity even for well-known spaces like the Sierpi\'nski carpet remains an open problem, and in many cases the infimum is not actually achieved.

A natural question is then, what if we allow \textit{branched} quasisymmetric images $Y$ of $X$ in \eqref{eq:confdim}? We would obtain another quasisymmetrically invariant quantity, potentially smaller than conformal Assouad dimension. Is it computable? Is the infimum achieved?

In fact, we show that for compact, bounded turning spaces this construction exactly recovers the Nagata dimension. In particular, in stark contrast to (non-branched) conformal dimension, we obtain an integer-valued quantity in which the infimum is always achieved. This gives a new characterization of the Nagata dimension.

\begin{corollary}\label{cor:bcdim}
Let $X$ be a compact, bounded turning metric space. Then
$$ \dim_N(X) = \inf\{ \dim_A(Y): \text{there is a non-constant branched quasisymmetry of } X \text{ onto } Y\}.$$
This includes the statement that if one of these numbers is $\infty$, then the other is as well.

Moreover, the infimum above is always achieved if $\dim_N(X)<\infty$.
\end{corollary}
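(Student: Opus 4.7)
Denote the right-hand side by $\beta(X)$. The plan is to prove $\beta(X) \leq \dim_N(X)$ (with the infimum achieved) and $\dim_N(X) \leq \beta(X)$ separately.

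For the upper bound, assume $n := \dim_N(X) < \infty$ and $|X| > 1$ (the singleton case being trivial). I would apply Theorem \ref{thm:main} with $\epsilon = 1/2$ to obtain a Lipschitz light map $f \colon (X, d^{1/2}) \to \RR^n$. Regarded as a map from $(X,d)$ onto $Y := f(X) \subseteq \RR^n$, the diameter estimate \eqref{eq:expand} reads $\diam(f(K)) \asymp \diam(K)^{1/2}$ for every continuum $K \subseteq X$. For intersecting non-trivial continua $E, E'$ this gives
\[
\frac{\diam(f(E))}{\diam(f(E'))} \leq C \left(\frac{\diam(E)}{\diam(E')}\right)^{1/2},
\]
so $f$ is a branched quasisymmetry with control $\eta(t) = Ct^{1/2}$. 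Bounded turning provides a non-trivial continuum whose image has positive diameter, so $f$ is non-constant; and $\dim_A(Y) \leq n$ since $Y \subseteq \RR^n$. This yields $\beta(X) \leq n$ with the infimum achieved by $Y$.

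For the lower bound, let $f \colon X \to Y$ be any non-constant branched quasisymmetry with $\dim_A(Y) = m < \infty$. First I would show $f$ does not collapse any non-trivial continuum: if $E$ were non-trivial with $\diam(f(E)) = 0$, the branched-QS inequality would force $\diam(f(E')) = 0$ for every non-trivial continuum $E'$ meeting $E$, and bounded turning would propagate the collapse through chains of intersecting continua to all of $X$, contradicting non-constancy. Fixing a reference continuum $K_0 \subseteq X$ with $d_0 := \diam(f(K_0)) > 0$ and chaining through $K_0$, one extracts two moduli $\phi, \psi \colon [0,\infty) \to [0,\infty)$, homeomorphisms fixing $0$, satisfying $\diam_X(C) \leq \phi(\diam_Y(f(C)))$ for every continuum $C \subseteq X$ and $\diam_Y(f(A)) \leq \psi(\diam_X(A))$ for every subset $A \subseteq X$ (using bounded turning to engulf $A$ in a continuum of comparable diameter). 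Given $s > 0$, I would then pull back a $Cs$-bounded cover of $Y$ with $s$-multiplicity at most $m+1$ — which exists because $\dim_N(Y) \leq \dim_A(Y) = m$ by \eqref{eq:nagassouad} — by taking connected components of $f^{-1}(V)$; the $\phi$-bound controls the diameter of each component, and the $\psi$-bound transfers the multiplicity.

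The main obstacle is that the moduli $\phi, \psi$ produced directly from the branched-QS inequality are only general homeomorphisms, not power functions, so the pullback cover is at best $\phi(s)$-bounded, not $cs$-bounded with $c$ a constant as the Nagata definition requires. I would overcome this by exploiting the fact that $Y$ is doubling (as $\dim_A(Y) < \infty$) to upgrade $\eta$ to power-type — a branched-map analog of the Tukia--V\"ais\"al\"a regularization of ordinary quasisymmetric maps between doubling spaces — which forces $\phi$ and $\psi$ to be power functions and restores the linear scaling. An equivalent, perhaps cleaner, route is to compose $f$ with the Lipschitz light map $g \colon (Y, d_Y^{1/2}) \to \RR^m$ supplied by Theorem \ref{thm:main} applied to $Y$ and check that $g \circ f$ is Lipschitz light as a map from a sufficiently strong snowflake $(X, d^{\delta})$ into $\RR^m$; combining this with the quasisymmetric invariance of Nagata dimension \cite[Theorem 1.2]{LS} and the inequality $\dim_N \leq \dim_L$ from \eqref{eq:topnaglip} then gives $\dim_N(X) = \dim_N(X, d^\delta) \leq \dim_L(X, d^\delta) \leq m$.
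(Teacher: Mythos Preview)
Your upper bound argument is correct and matches the paper's. The lower bound, however, has a genuine gap in both proposed routes.

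In route 1, the obstacle you identify is real, but the fix you propose---a Tukia--V\"ais\"al\"a-type power-regularization for branched quasisymmetries---is not an established result and you do not prove it. The standard Tukia--V\"ais\"al\"a argument uses the inverse map, which does not exist here. In route 2, the claim that $g\circ f$ is Lipschitz light from some snowflake $(X,d^\delta)$ is not justified: the branched-QS condition on $f$ controls only \emph{ratios} of continuum diameters, not absolute distances, so there is no a priori H\"older bound $d_Y(f(x),f(y))\lesssim d_X(x,y)^{2\delta}$, and hence no Lipschitz bound on $g\circ f$. Both routes are stuck at essentially the same point: extracting a scale-linear estimate from a map that only controls ratios.

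The paper resolves this with the Guo--Williams \emph{pullback metric} $d_f(x,y)=\inf\{\diam(f(K)):K\ \text{a continuum containing}\ x,y\}$. One shows that the identity $(X,d_X)\to(X,d_f)$ is a genuine (injective) quasisymmetry, while $f\colon(X,d_f)\to Y$ satisfies $\diam_Y(f(J))=\diam_f(J)$ for every continuum $J$, and $(X,d_f)$ is $1$-bounded turning. By Lemma \ref{lem:equiv} the second map is therefore Lipschitz light. Now the chain
\[
\dim_N(X)=\dim_N(X,d_f)\leq\dim_N(Y)\leq\dim_A(Y)
\]
follows from QS-invariance of $\dim_N$, the fact that Lipschitz light maps cannot decrease $\dim_N$ (\cite[Lemma 4.1]{GCD}), and \eqref{eq:nagassouad}. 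The pullback metric is exactly the device that converts the ratio control of a branched quasisymmetry into the linear (Lipschitz light) control you were seeking.
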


\subsection{Organization of the paper}
Section \ref{sec:prelim} contains basic definitions and notation, section \ref{sec:mainproof} contains the proof of Theorem \ref{thm:main}, and section \ref{sec:bqs} contains additional background and the proof of Corollary \ref{cor:bcdim}.

\section{Preliminaries}\label{sec:prelim}

\subsection{Basics}
We write $(X,d)$ for a metric space, but often just $X$ with the metric $d$ understood from context. In such a case, we often below write $X^\epsilon$ for the metric space $(X,d^\epsilon)$ (which is in fact a metric space if $\epsilon\in(0,1)$. 

An open ball in $X$ centered at $x\in X$ of radius $r$ is denoted $B(x,r)$, and the associated closed ball by $\overline{B}(x,r)$. 

The distance between a point $x\in X$ and a set $E\subseteq X$ is
$$ \dist(x,E) = \inf\{ d(x,y) : y\in E\}$$
and the diameter of the set $E$ is
$$ \diam(E) = \sup\{ d(x,y): x,y\in E\}.$$
In cases where we want to clarify which metric space we are discussing, we may include it as a subscript, e.g., writing $d_X(a,b)$ or $\diam_X(E)$. Similarly, we write $\diam^\epsilon(E)$ for the diameter of $E$ in the metric space $(X,d^\epsilon)$, which of course is simply $(\diam(E))^\epsilon$.

We use the notation $A \lesssim B$ to indicate that $A \leq c B$ for some constant $c$ that may change from line to line. If we wish to denote what quantities $\alpha, \beta, \dots$ the constant $c$ depends on, we write $A \lesssim_{\alpha,\beta,\dots} B$. We write $A\approx B$ to mean $A\lesssim B$ and $B\lesssim A$.
\subsection{Summary of dimensions}
Three notions of dimension for a metric space $X$ are used in this paper: The Nagata dimension $\dim_N(X)$ (see Definition \ref{def:nagata}), the Lipschitz dimension $\dim_L(X)$ (see Definition \ref{def:lipdim}), and the Assouad dimension $\dim_A(X)$. 

Although we do not use the definition of Assouad dimension below, we include the definition for context: The Assouad dimension of $X$ is the infimum of all numbers $\beta>0$ such that, for some constant $C>0$, every set of diameter $d$ in $X$ can be covered by at most $C\epsilon^{-\beta}$ sets of diameter at most $\epsilon d$. See \cite[Chapter 10]{Heinonen} for more discussion. Unlike the Nagata and Lipschitz dimensions, the Assouad dimension may not be an integer.

\section{Proof of Theorem \ref{thm:main}}\label{sec:mainproof}
In this section, we prove the main result of the paper, Theorem \ref{thm:main}. Fix a metric space $X$ of Nagata dimension $n\in\mathbb{N}\cup\{0\}$ and $\epsilon\in (0,1)$.

We first quickly show the ``moreover...'' statement of the theorem: that $X^\epsilon$ does not admit a Lipschitz light map to $\mathbb{R}^k$ for any $k<n$. If there were such a map, then we would have $\dim_L(X^\epsilon)\leq k$. However, by \cite[Corollary 3.5]{GCD} and the quasisymmetric invariance of Nagata dimension, we would then have
\begin{equation}\label{eq:infinite}
 k\geq \dim_L(X^\epsilon) \geq \dim_N(X^\epsilon) = \dim_N(X) = n,
\end{equation}
a contradiction.

In the rest of this section, we focus on proving the first part of Theorem \ref{thm:main} by constructing a Lipschitz light map from $X^\epsilon$ to $\RR^n$.

Let $C$ be the associated Nagata dimension constant from Definition \ref{def:nagata}. The following proposition of Lang and Schlichenmaier provides a useful collection of coverings of $X$; we state only the conclusions of their result that we will need later on.

\begin{proposition}[Proposition 4.1 of \cite{LS}]\label{prop:cover}.
There are constants $\hat{c}$ and $\hat{r}_0$ (depending only on $n$ and $C$) such that for each $r\geq \hat{r}_0$, there is a sequence of coverings $\hat{\mathcal{B}}^j$ of $X$ ($j\in\ZZ$) such that:
\begin{enumerate}[(i)]
\item For each $j\in\ZZ$, $\hat{\B}^j = \cup_{k=0}^n \hat{\B}^j_k$, where each $\hat{\B}^j_k$ is a $\hat{c}r^j$-bounded family of $r^j$-multiplicity at most $1$.
\item For each $j\in\ZZ$ and $x\in X$, there is a $\hat{B}\in\hat{\B}^j$ that contains the closed ball $\overline{B}(x,r^j)$. 
\end{enumerate}

\end{proposition}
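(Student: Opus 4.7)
The plan is to reduce the proposition to the ``colored'' reformulation of Nagata dimension, then inflate each cell to accommodate closed $r^j$-balls. Specifically, I would first invoke the equivalence between the multiplicity version of Nagata dimension stated in Definition~\ref{def:nagata} and the following colored version: there exists $c' = c'(n,C)$ such that for every $s>0$, $X$ admits a $c's$-bounded covering $\B = \bigcup_{k=0}^n \B_k$ in which each subfamily $\B_k$ has $s$-multiplicity at most~$1$. Applied at scale $s = s_j$ slightly larger than $r^j$, this produces property (i) after relabeling constants.

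Deriving this colored decomposition from the bare multiplicity bound is the main obstacle. The hypothesis ``$s$-multiplicity $\leq n+1$'' only controls the clique number of the ``conflict graph at scale $s$'' (whose vertices are members of the cover, with two adjacent iff some set of diameter $\le s$ meets both), not its chromatic number, so a generic cover supplied by Definition~\ref{def:nagata} cannot simply be $(n+1)$-colored. I would resolve this by a net-based inductive construction: fix a maximal $s$-separated net $N$, process its points in a canonical order (for instance following a tree built from coarser nets across scales), and attach to each point a cell of bounded diameter together with a color chosen so as not to conflict with cells already placed. The multiplicity bound is invoked at each stage to argue that at most $n$ colors are forbidden, leaving at least one of the $n+1$ colors available.

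For condition~(ii), I would work at scale $s_j = K r^j$ with $K$ large and replace each cell $B$ of the colored cover by a fattening $\hat B$ containing $B$ together with every closed $r^j$-ball meeting $B$. This enlargement costs an additive $O(r^j)$ in diameter, leaves cells within a single color class at distance $\ge s_j - O(r^j) > r^j$ (provided $K$ is large enough), and guarantees that every closed $r^j$-ball is contained in some member: any such ball meets some cell $B$ by the covering property, hence lies in $\hat B$. Thus both (i) and (ii) are satisfied by the family $\hat{\B}^j = \{\hat B : B \in \B\}$ with its induced coloring.

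The threshold $\hat r_0$ arises from the requirement on $K$: one needs $r \ge \hat r_0$ so that the fattening radius $r^j$ remains small compared to the cell diameter $c' s_j = c' K r^j$, which forces $K$, and hence $r$, to be bounded below by a constant depending only on $n$ and $C$. The final constants $\hat c$ and $\hat r_0$ then depend only on $n$ and $C$ as required.
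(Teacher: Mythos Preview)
The paper does not prove this proposition at all; it is quoted verbatim as Proposition~4.1 of \cite{LS} and used as a black box. So there is no proof in the paper to compare your proposal against.

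As to your sketch on its own merits: the fattening step producing condition~(ii) is correct and routine, and you have correctly isolated the real content as the passage from ``$s$-multiplicity $\le n{+}1$'' to ``$(n{+}1)$-colorable with each color class of $s$-multiplicity $1$.'' But your proposed resolution is not yet an argument. You observe (rightly) that the hypothesis bounds only the clique number of the conflict graph, not its chromatic number, and then propose to greedily color cells attached to a maximal $s$-net, claiming that at each step ``the multiplicity bound'' forbids at most $n$ colors. That inference does not follow: the $s$-multiplicity hypothesis controls how many cells a set of diameter $\le s$ can meet, whereas the conflict neighborhood of a cell of diameter $\approx c's$ has diameter $\approx (c'{+}2)s$, and in a non-doubling space an $s$-net can have arbitrarily many points within such a neighborhood. (Think of the center of an infinite star of unit rays: the central cell conflicts with infinitely many others, so a greedy coloring in the wrong order fails.) Your parenthetical ``tree built from coarser nets across scales'' hints that you intend a multi-scale ordering that would tame this, and indeed the Lang--Schlichenmaier construction does iterate the Nagata hypothesis across several scales rather than coloring a single-scale cover; but as written your outline does not explain what that ordering is or why it limits the number of forbidden colors to $n$. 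That is the genuine gap.
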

For our purposes, it is more convenient to work with open sets (and to have a constant $\geq 1$), so we make a minor adjustment to Proposition \ref{prop:cover}.

\begin{lemma}\label{lem:cover}
There are constants $c\geq 1$ and $r_0$ (depending only on $n$ and $C$) such that for each $r\geq r_0$, there is a sequence of coverings $\mathcal{B}^j$ of $X$ ($j\in\ZZ$) such that:
\begin{enumerate}[(i)]
\item For each $j\in\ZZ$, $\B^j = \cup_{k=0}^n \B^j_k$, where each $\B^j_k$ is a $cr^j$-bounded family of $\frac{1}{2}r^j$-multiplicity at most $1$.
\item For each $j\in\ZZ$ and $x\in X$, there is a $B\in\B^j$ that contains the closed ball $\overline{B}(x,r^j)$. 
\item For each $j\in\ZZ$ and $B\in\B^j$, the set $B$ is open.
\end{enumerate}
\end{lemma}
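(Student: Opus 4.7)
The plan is to obtain $\mathcal{B}^j$ by slightly fattening the sets produced by Proposition \ref{prop:cover}. Specifically, given $r \geq \hat{r}_0$ and the coverings $\hat{\mathcal{B}}^j$ supplied by that proposition, I would set
$$\mathcal{B}^j_k := \{N_{r^j/4}(\hat B) : \hat B \in \hat{\mathcal{B}}^j_k\}, \qquad \mathcal{B}^j := \bigcup_{k=0}^n \mathcal{B}^j_k,$$
where $N_\delta(E) := \{x \in X : \dist(x, E) < \delta\}$ denotes the open $\delta$-neighborhood. Each $N_{r^j/4}(\hat B)$ is open, so condition (iii) holds, and since $N_{r^j/4}(\hat B) \supseteq \hat B$, condition (ii) follows immediately from the corresponding property of $\hat{\mathcal{B}}^j$.

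For the diameter bound in (i), note that $\diam N_{r^j/4}(\hat B) \leq \diam \hat B + r^j/2 \leq (\hat c + 1/2) r^j$, so it suffices to set $c := \max(\hat c + 1/2, 1)$ and $r_0 := \hat{r}_0$. The real content of (i) is the strengthening of the multiplicity bound from ``$r^j$-multiplicity $\leq 1$'' to ``$\tfrac{1}{2} r^j$-multiplicity $\leq 1$''. For this, I would fix $k$ and suppose some $S \subseteq X$ with $\diam S \leq r^j/2$ met both $N_{r^j/4}(\hat B_1)$ and $N_{r^j/4}(\hat B_2)$ for distinct $\hat B_1, \hat B_2 \in \hat{\mathcal{B}}^j_k$. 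Choosing $p_i \in S \cap N_{r^j/4}(\hat B_i)$ and $q_i \in \hat B_i$ with $d(p_i, q_i) < r^j/4$ for $i = 1,2$, the two-point set $\{q_1, q_2\}$ would have diameter at most $r^j/4 + r^j/2 + r^j/4 = r^j$ yet meet both $\hat B_1$ and $\hat B_2$, contradicting the $r^j$-multiplicity bound on $\hat{\mathcal{B}}^j_k$ from Proposition \ref{prop:cover}.

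There is no real obstacle here; the argument is a routine thickening of the Lang--Schlichenmaier covering. The only delicate point is calibrating the thickening radius to be small enough (at most $r^j/4$) that the stronger multiplicity assertion follows from the weaker one, while still preserving the diameter bound up to a universal additive error. The choice $c := \max(\hat c + 1/2, 1)$ simultaneously absorbs this error and ensures $c \geq 1$ as required.
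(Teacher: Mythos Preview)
Your argument is correct and is exactly the paper's approach: take open $\tfrac14 r^j$-neighborhoods of the Lang--Schlichenmaier sets, set $r_0=\hat r_0$ and $c=\max(\hat c+\tfrac12,1)$. You have simply written out the ``follow easily'' verification (in particular the multiplicity step) that the paper omits.
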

\begin{proof}
Apply Proposition \ref{prop:cover}. For each $j\in\mathbb{Z}$, let $\B^j$ be the collection of open $\frac{1}{4}r^j$-neighborhoods of elements of $\hat{\B}^j$. The properties in the lemma then follow easily, with $r_0 = \hat{r}_0$ and $c=\hat{c} + \frac{1}{2}$. If necessary, we may replace $c$ by $\max\{c,1\}$ without changing the conclusion of the lemma, in order to ensure that $c\geq 1$.
\end{proof}

We now fix $r$ sufficiently large so that Lemma \ref{lem:cover} holds and in addition so that 
\begin{equation}\label{eq:rdef}
\frac{2}{r^\epsilon - 1} + \frac{4c}{r^{1-\epsilon} -1} < 1 - \frac{1}{2^\epsilon},
\end{equation}
where $c$ is the constant from Lemma \ref{lem:cover}. We apply Lemma \ref{lem:cover} and obtain coverings $\{\B^j\}_{j\in\ZZ}$ of $X$ that will be fixed for the remainder of the proof.

Given $j\in\ZZ$ and $B\in \B^j$, define $\psi_B\colon X \rightarrow \RR$ by 
$$ \psi_B(x) = \min\left\{1, r^{-j} \dist(x, B^c)\right\}.$$

We have the following basic properties of $\psi_B$.
\begin{lemma}\label{lem:psi}
The functions $\psi_B$ have the following properties:
\begin{enumerate}[(i)]
\item $0 \leq \psi_B(x) \leq 1$ for all $B\in \B, x\in X$.
\item $\psi_B(x) > 0$ if and only if $x\in B$. 
\item If $B\in\B^j$, then $\psi_B$ is Lipschitz with constant at most $r^{-j}$.
\item If $x\in X$ and $j\in \ZZ$, then there is a $k\in\{0,\dots, n\}$ and a $B\in \B^j_k$ such that $\psi_B(x)= 1$. 
\end{enumerate}
\end{lemma}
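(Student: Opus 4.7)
The plan is to verify the four items directly from the definition $\psi_B(x) = \min\{1, r^{-j}\dist(x, B^c)\}$, using only basic properties of the distance-to-set function and the specific features of the covering $\B^j$ established in Lemma \ref{lem:cover}. None of these steps should involve any computation beyond unpacking definitions, so I do not anticipate a genuine obstacle; the point of the lemma is just to record conveniently what will be needed for the main construction.

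For (i), since $\dist(x,B^c) \geq 0$ we have $r^{-j}\dist(x,B^c) \geq 0$, and the min with $1$ caps the value at $1$. For (iii), I would use the standard fact that $y \mapsto \dist(y,E)$ is $1$-Lipschitz for any nonempty $E \subseteq X$, so $r^{-j}\dist(\cdot, B^c)$ is $r^{-j}$-Lipschitz; since $t \mapsto \min\{1,t\}$ is $1$-Lipschitz on $\RR$, the composition $\psi_B$ is $r^{-j}$-Lipschitz.

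For (ii), the key input is Lemma \ref{lem:cover}(iii): the set $B$ is open, so $B^c$ is closed. Hence $\dist(x, B^c) > 0$ if and only if $x \notin B^c$, i.e., if and only if $x \in B$, which translates directly to $\psi_B(x) > 0 \iff x \in B$. For (iv), I would invoke Lemma \ref{lem:cover}(ii) to pick some $B \in \B^j$ with $\obar{B}(x, r^j) \subseteq B$. Then every $y \in B^c$ satisfies $d(x,y) > r^j$, so $\dist(x, B^c) \geq r^j$ and hence $r^{-j}\dist(x, B^c) \geq 1$, giving $\psi_B(x) = 1$. Since $\B^j = \bigcup_{k=0}^n \B^j_k$, this $B$ belongs to some $\B^j_k$ with $k \in \{0,\dots,n\}$, completing the proof.
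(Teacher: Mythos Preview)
Your proof is correct and follows essentially the same approach as the paper's own argument: both verify (i)--(iv) directly from the definition of $\psi_B$, using the openness of $B$ for (ii), the $1$-Lipschitz property of distance functions for (iii), and Lemma~\ref{lem:cover}(ii) for (iv). The only cosmetic difference is that you phrase (iii) via composition with $t\mapsto\min\{1,t\}$ while the paper phrases it as a minimum of two Lipschitz functions.
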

\begin{proof}
Item (i) is immediate. For item (ii), if $\psi_B(X)>0$, then $x\notin B^c$, so $x\in B$. If $x\in B$, then since $B$ is open, $\psi_B(x)>0$. For item (iii), distance functions are $1$-Lipschitz by the triangle inequality, and the minimum of two $L$-Lipschitz functions is $L$-Lipschitz. Item (iv) follows immediately from the definition of $\psi_B$ and property (ii) of Lemma \ref{lem:cover}.
\end{proof}

Let $e_0 = 0\in \RR^n$ and let $\{e_1, \dots, e_n\}$ be an orthonormal basis of $\RR^n$.  For each $j\in \ZZ$, set
$$ \phi^j(x) = \sum_{k=0}^n \left(\sum_{B\in \B^j_k} \psi_B(x)\right)e_k.$$

\begin{lemma}\label{lem:phi}
The functions $\phi^j$ have the following properties, for each $j\in\ZZ$:
\begin{enumerate}[(i)]
\item $|\phi^j(x)| \leq n+1$ for all $x\in X$.
\item As a function from $X$ to $\mathbb{R}^n$, $\phi^j$ is Lipschitz with constant $2(n+1)r^{-j}$.
\end{enumerate}
\end{lemma}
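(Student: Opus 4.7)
The plan is to deduce both parts from the $\tfrac{1}{2}r^j$-multiplicity hypothesis on each sub-family $\B^j_k$ (given by Lemma \ref{lem:cover}(i)), combined with the elementary properties of the bump functions $\psi_B$ recorded in Lemma \ref{lem:psi}. The key observation, which drives both parts, is the following: for any point $z \in X$ and any $k \in \{0, \dots, n\}$, the singleton $\{z\}$ has diameter $0 \leq \tfrac{1}{2} r^j$, so it meets at most one element of $\B^j_k$. In view of Lemma \ref{lem:psi}(ii), this means that at most one of the functions $\psi_B$ with $B \in \B^j_k$ is nonzero at $z$.

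For part (i), the observation immediately yields $\sum_{B \in \B^j_k} \psi_B(x) \leq 1$ for every $k$, since the inner sum has at most one nonzero term, itself bounded by $1$ via Lemma \ref{lem:psi}(i). Summing over the $n+1$ coordinates and using $|e_k| \leq 1$ gives $|\phi^j(x)| \leq n+1$ by the triangle inequality in $\RR^n$ (in fact one gets the sharper bound $n$, but the stated estimate suffices).

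For part (ii), I would fix $x, y \in X$ and apply the key observation to both singletons $\{x\}$ and $\{y\}$: for each $k$, at most one element of $\B^j_k$ contains $x$ and at most one contains $y$. Consequently, at most two sets $B \in \B^j_k$ satisfy $\psi_B(x) - \psi_B(y) \neq 0$, since any other $B \in \B^j_k$ has $\psi_B$ vanishing at both points. For each such surviving $B$, Lemma \ref{lem:psi}(iii) provides the Lipschitz estimate $|\psi_B(x) - \psi_B(y)| \leq r^{-j} d(x,y)$. Summing these contributions (at most two per $k$, across the $n+1$ coordinates) via the triangle inequality in $\RR^n$ produces the desired bound $|\phi^j(x) - \phi^j(y)| \leq 2(n+1)\, r^{-j} d(x,y)$.

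No real obstacle is expected; this is essentially bookkeeping once the singleton-multiplicity observation is isolated. The one subtlety worth flagging is how the factor $2$ (rather than $4$) in the Lipschitz constant arises: it comes directly from the count of ``at most two nonzero terms per $k$'' — one for a $B$ containing $x$, one for a $B$ containing $y$ — and does not require any case analysis on whether $d(x,y)$ is smaller or larger than $r^j$.
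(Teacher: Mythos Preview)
Your proposal is correct and follows essentially the same argument as the paper: both proofs use the multiplicity hypothesis from Lemma~\ref{lem:cover}(i) together with Lemma~\ref{lem:psi}(ii) to count nonzero terms (at most one per $\B^j_k$ at a given point, hence at most two per $\B^j_k$ when comparing two points), and then invoke Lemma~\ref{lem:psi}(i),(iii) to bound each surviving term. The only cosmetic difference is that the paper flattens the double sum into $\sum_{B\in\B^j}$ and counts $n+1$ (respectively $2(n+1)$) nonzero terms globally, whereas you keep the sum stratified by $k$; your parenthetical observation that $e_0=0$ gives the sharper bound $n$ is correct but inessential.
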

\begin{proof}
By Lemma \ref{lem:cover}(i), a given point can be in $B$ for at most $n+1$ different sets $B\in\B^j$. It follows from this and Lemma \ref{lem:psi}(ii) that at most $(n+1)$ summands in the rightmost sum of the chain 
$$|\phi^j(x)| \leq \sum_{k=0}^n \sum_{B\in \B^j_k} |\psi_B(x)| = \sum_{B\in \B^j} |\psi_B(x)|$$
are non-zero. By Lemma \ref{lem:psi}(i), each summand is bounded by $1$, so $|\phi^j(x)|\leq n+1$, proving (i). 

Similarly, given $x,y\in X$, at most $2(n+1)$ of the summands in the rightmost sum of the chain 
$$|\phi^j(x) - \phi^j(y)| \leq \sum_{k=0}^n \left|\sum_{B\in \B^j_k} \psi_B(x) - \psi_B(y)\right| \leq \sum_{B\in \B^j} |\psi_B(x) - \psi_B(y)|$$
are non-zero. By Lemma \ref{lem:psi}(iii), each summand is bounded above by $r^{-j}d(x,y)$. Therefore
$$|\phi^j(x) - \phi^j(y)| \leq 2(n+1)r^{-j}d(x,y),$$
proving (ii).

\end{proof}

Fix $x_0\in X$. Define $f\colon X \rightarrow \RR^n$ by
\begin{equation}\label{eq:fdef}
 f(x) = \sum_{j\in\ZZ} r^{j\epsilon} (\phi^j(x) - \phi^j(x_0))
\end{equation}
In the remainder of the section, we prove that $f$ is Lipschitz light when viewed as a map from $X^\epsilon$ to $\RR^n$. First, we show that it is Lipschitz.

\begin{lemma}\label{lem:lipschitz}
The sum in \eqref{eq:fdef} converges absolutely for each $x\in X$ and defines a Lipschitz map $f\colon X^\epsilon \rightarrow \RR^n$. The Lipschitz constant of $f$ can be bounded above depending only on $C$, $\epsilon$, and $n$.
\end{lemma}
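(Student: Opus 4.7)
The plan is to exploit the two bounds on $\phi^j$ from Lemma~\ref{lem:phi}: for any $x,y \in X$, the uniform estimate gives $|\phi^j(x)-\phi^j(y)| \leq 2(n+1)$, while the Lipschitz estimate gives $|\phi^j(x)-\phi^j(y)| \leq 2(n+1)r^{-j}d(x,y)$. Multiplied by the weight $r^{j\epsilon}$ appearing in \eqref{eq:fdef}, these become $2(n+1)r^{j\epsilon}$ and $2(n+1)d(x,y)r^{j(\epsilon-1)}$, respectively. Since $0<\epsilon<1$, the first decays geometrically as $j \to -\infty$ while the second decays geometrically as $j \to +\infty$, so splitting the sum over $j$ at the appropriate scale will yield both absolute convergence and the Lipschitz bound in one stroke.

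To prove absolute convergence of $f(x)$, I take $y = x_0$ and, assuming $x \neq x_0$, choose $j_0 \in \ZZ$ with $r^{j_0-1} \leq d(x,x_0) < r^{j_0}$. For indices $j < j_0$ I apply Lemma~\ref{lem:phi}(i); the resulting geometric sum $\sum_{j < j_0} 2(n+1) r^{j\epsilon}$ is comparable to $r^{j_0 \epsilon}$, which by the choice of $j_0$ satisfies $r^{j_0 \epsilon} \lesssim_{r,\epsilon} d(x,x_0)^\epsilon$. For indices $j \geq j_0$ I apply Lemma~\ref{lem:phi}(ii); the geometric sum $2(n+1) d(x,x_0) \sum_{j \geq j_0} r^{j(\epsilon-1)}$ is comparable to $d(x,x_0) \cdot r^{j_0(\epsilon-1)}$, which is again $\lesssim_{r,\epsilon} d(x,x_0)^\epsilon$. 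Together these show the sum defining $f(x)$ converges absolutely, with the case $x = x_0$ trivial.

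For the Lipschitz bound, the identical argument works with $x_0$ replaced by a general $y$, since $|f(x)-f(y)| \leq \sum_j r^{j\epsilon}|\phi^j(x) - \phi^j(y)|$. Given distinct $x,y \in X$, I choose $j_1 \in \ZZ$ with $r^{j_1-1} \leq d(x,y) < r^{j_1}$, apply Lemma~\ref{lem:phi}(i) for $j < j_1$ and Lemma~\ref{lem:phi}(ii) for $j \geq j_1$, and sum the two geometric series. Each contributes at most a constant (depending only on $n$, $\epsilon$, and $r$) times $d(x,y)^\epsilon$, so $|f(x)-f(y)| \lesssim d(x,y)^\epsilon$, which is exactly the distance from $x$ to $y$ in $X^\epsilon$. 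Since $r$ was fixed by \eqref{eq:rdef} in terms of $n$, $\epsilon$, and $C$, this gives the asserted dependence of constants.

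I do not expect any real obstacle in this lemma: it is a standard two-scale geometric summation of the type that appears throughout Assouad-type arguments (cf.~\cite[Chapter~12]{Heinonen}). The genuinely delicate analysis lies ahead, in proving Lipschitz lightness, where the quantitative choice of $r$ forced by \eqref{eq:rdef} becomes essential; for mere Lipschitz continuity, any $r > 1$ would suffice.
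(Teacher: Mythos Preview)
Your argument is correct and essentially identical to the paper's own proof: both split the sum over $j$ at the scale of $d(x,y)$, apply Lemma~\ref{lem:phi}(i) on the small-scale side and Lemma~\ref{lem:phi}(ii) on the large-scale side, and sum the resulting geometric series. The only differences are cosmetic (the paper proves the Lipschitz bound first and then remarks that convergence is identical, and the indexing convention for the cutoff $j_0$ differs by one); your closing observation that any $r>1$ would suffice here is also correct.
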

\begin{proof}
Assuming the convergence statement for a moment, let us prove that $f$ is Lipschitz.

Let $x,y\in X$. Choose $j_0\in \ZZ$ such that $r^{j_0} \leq d(x,y) < r^{j_0+1}$. Then, using both parts of Lemma \ref{lem:phi}, we have
\begin{align*}
|f(x)-f(y)| &\leq \sum_{j\in\ZZ} r^{j\epsilon} |\phi^j(x) - \phi^j(y)|\\
&\leq (n+1)\sum_{j\leq j_0} r^{j\epsilon} + 2(n+1)\sum_{j>j_0} r^{j\epsilon} r^{-j} d(x,y)\\
&\lesssim_{r,n} r^{j_0\epsilon} + r^{j_0(\epsilon-1)}d(x,y)\\
&\lesssim_{r,n} r^{j_0\epsilon} + r^{j_0\epsilon}\\
&\lesssim_{r,n} d(x,y)^\epsilon.
\end{align*}
The choice of $r$ was made above depending only on $\epsilon$ and $c$, which itself depends only on $C$ and $n$.

The proof that the sum in \eqref{eq:fdef} converges absolutely is essentially identical, just with $y$ replaced by $x_0$.
\end{proof}

We now work to prove that $f$ is Lipschitz light. The following is the main technical lemma.
\begin{lemma}\label{lem:bound}
Suppose that $x,y\in X$, $s>0$, $j_0\in\ZZ$, and $k\in\{1, \dots, n\}$ have the following properties:
\begin{enumerate}[(i)]
\item $|f(x)-f(y)|\leq s$,
\item $d(x,y) \leq 2c r^{j_0}$,
\item $r^{j_0\epsilon} > 2^\epsilon s$, and
\item $\sum_{B\in\B^{j_0}_k} \psi_B(x) = 1$.
\end{enumerate}
Then
$$y\in \bigcup_{B\in \B^{j_0}_k} B.$$
\end{lemma}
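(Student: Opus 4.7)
The plan is to argue by contradiction: assume $y \notin \bigcup_{B \in \mathcal{B}^{j_0}_k} B$, and deduce that $|f(x) - f(y)| > s$, contradicting (i). The key idea is to ignore all coordinates of $f$ except the $k$-th, and to show that \emph{already in the $k$-th coordinate} we get too much separation.

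Writing $\phi^j_k(z) := \sum_{B \in \mathcal{B}^j_k} \psi_B(z)$, so that the $k$-th coordinate of $\phi^j$ in the basis $\{e_0,\dots,e_n\}$ is $\phi^j_k$, I would expand
$$f_k(x) - f_k(y) = \sum_{j \in \ZZ} r^{j\epsilon}\bigl(\phi^j_k(x) - \phi^j_k(y)\bigr)$$
and isolate the $j = j_0$ term. By assumption (iv), $\phi^{j_0}_k(x) = 1$, while under our contradiction hypothesis, Lemma \ref{lem:psi}(ii) gives $\psi_B(y) = 0$ for every $B \in \mathcal{B}^{j_0}_k$, so $\phi^{j_0}_k(y) = 0$. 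Thus the $j_0$-term contributes exactly $r^{j_0\epsilon}$ to $f_k(x) - f_k(y)$.

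The bulk of the work is then bounding the tail $\sum_{j \neq j_0} r^{j\epsilon}|\phi^j_k(x) - \phi^j_k(y)|$. Because $\mathcal{B}^j_k$ has $\tfrac{1}{2}r^j$-multiplicity at most $1$, at most one $B \in \mathcal{B}^j_k$ contains any given point, so $\phi^j_k$ takes values in $[0,1]$ and has at most two nonzero $\psi_B$-summands when evaluated at $x$ and $y$. This yields two complementary bounds:
\begin{itemize}
\item For $j < j_0$, the crude bound $|\phi^j_k(x) - \phi^j_k(y)| \leq 2$;
\item For $j > j_0$, a Lipschitz bound $|\phi^j_k(x) - \phi^j_k(y)| \leq 2 r^{-j} d(x,y) \leq 4c\, r^{j_0 - j}$, using hypothesis (ii).
\end{itemize}
Summing these as geometric series gives $\sum_{j < j_0} r^{j\epsilon} \cdot 2 = \tfrac{2 r^{j_0\epsilon}}{r^\epsilon - 1}$ and $\sum_{j > j_0} r^{j\epsilon} \cdot 4 c r^{j_0 - j} = \tfrac{4c r^{j_0\epsilon}}{r^{1-\epsilon} - 1}$. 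The defining inequality \eqref{eq:rdef} was crafted precisely so that these combined tails are strictly less than $(1 - 2^{-\epsilon})r^{j_0\epsilon}$.

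Combining, $|f_k(x) - f_k(y)| > r^{j_0\epsilon} - (1 - 2^{-\epsilon})r^{j_0\epsilon} = 2^{-\epsilon} r^{j_0 \epsilon} > s$ by (iii), which contradicts (i) since $|f_k(x) - f_k(y)| \leq |f(x) - f(y)|$. The main obstacle is really the bookkeeping: choosing to reduce to one coordinate so that the $j_0$-term provides a clean, explicit lower bound $r^{j_0\epsilon}$, and arranging the two regimes (absolute bound for $j < j_0$, Lipschitz bound for $j > j_0$) so that the two geometric sums match exactly the two terms in the careful choice \eqref{eq:rdef} of $r$.
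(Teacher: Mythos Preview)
Your proposal is correct and follows essentially the same argument as the paper: a contradiction via the $k$th coordinate, isolating the $j_0$ term as $r^{j_0\epsilon}$, splitting the remaining sum into $j<j_0$ (bounded by $2$ per term) and $j>j_0$ (Lipschitz bound with hypothesis (ii)), and invoking \eqref{eq:rdef} to force $r^{j_0\epsilon}\leq 2^\epsilon s$, contradicting (iii). The only cosmetic difference is that the paper phrases the final contradiction as $s \geq 2^{-\epsilon} r^{j_0\epsilon}$ rather than $|f_k(x)-f_k(y)|>s$, but the content is identical.
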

\begin{remark}
The purpose of this lemma is as follows: Given a scale $j_0$ and two points $x,y$, the lemma provides sufficient conditions to conclude that $x$ and $y$ lie in sets from $\mathcal{B}^{j_0}$ of the same ``color'', i.e., from the same family $\mathcal{B}^{j_0}_k$ for some $k$.

The reader may wonder about the variable $s$, which appears in the hypotheses but not the conclusions. This is merely to make the application of this lemma more transparent in the proof of Theorem \ref{thm:main} below.
\end{remark}

\begin{proof}[Proof of Lemma \ref{lem:bound}]
Note that the $k$th coordinate of $f(x)-f(y)$ can be written as
$$ \sum_{j\in\ZZ} r^{j\epsilon} \sum_{B\in \B^j_k} (\psi_B(x) - \psi_B(y)).$$

Suppose that conditions (i) through (iv) hold but that $y\notin \cup_{B\in \B^{j_0}_k} B$. By Lemma \ref{lem:psi}, it follows that $\psi_B(y) =0$ for each $B\in\B^{j_0}_k$. Then,
\begin{align}
s &\geq |f(x)-f(y)|\nonumber\\
&\geq \left|\sum_{j\in\ZZ}r^{j\epsilon} \sum_{B\in \B^j_k} (\psi_B(x) - \psi_B(y)) \right|\nonumber\\
&\geq r^{j_0 \epsilon} - \left|\sum_{j<j_0}r^{j\epsilon} \sum_{B\in \B^j_k} (\psi_B(x) - \psi_B(y))\right| - \left|\sum_{j>j_0}r^{j\epsilon} \sum_{B\in \B^j_k} (\psi_B(x) - \psi_B(y)\right|\nonumber\\
&\geq r^{j_0 \epsilon} - \sum_{j<j_0}r^{j\epsilon} \sum_{B\in \B^j_k} |\psi_B(x) - \psi_B(y))| - \sum_{j>j_0}r^{j\epsilon} \sum_{B\in \B^j_k} |\psi_B(x) - \psi_B(y)|| \label{eq:triangle}
\end{align}

For each $j<j_0$ in the sum in the middle of \eqref{eq:triangle}, there are at most two choices of $B\in\mathcal{B}^j_k$ for which the difference $|\psi_B(x) - \psi_B(y)|$ is non-zero. (This is because each of $x,y$ can be contained in at most one $B\in\mathcal{B}^j_k$, and Lemma \ref{lem:psi}(ii).) For those $B\in\mathcal{B}^j_k$ where this difference is non-zero, it is at most $1$ by Lemma \ref{lem:psi}(i).

Similarly, for each $j>j_0$ in the final sum of \eqref{eq:triangle}, there are at most two choices of $B\in\mathcal{B}^j_k$ for which the difference $|\psi_B(x) - \psi_B(y)|$ is non-zero.  For those $B\in\mathcal{B}^j_k$ where this difference is non-zero, it is at most $r^{-j}d(x,y)$ by Lemma \ref{lem:psi}(iii).

Thus, we have:
\begin{align*}
s&\geq r^{j_0 \epsilon} - 2\sum_{j<j_0}r^{j\epsilon} - 2\sum_{j>j_0} r^{j\epsilon}r^{-j}d(x,y)\\
&= r^{j_0\epsilon} - \frac{2r^{j_0\epsilon}}{r^\epsilon-1} - \frac{2r^{j_0(\epsilon-1)}}{r^{1-\epsilon} -1}d(x,y)\\
&\geq r^{j_0\epsilon} - \frac{2r^{j_0\epsilon}}{r^\epsilon-1} - \frac{4c r^{j_0\epsilon}}{r^{1-\epsilon} -1}\\
&= r^{j_0\epsilon}\left( 1 - \frac{2}{r^\epsilon - 1} - \frac{4c}{r^{1-\epsilon} -1} \right)
\end{align*}

By our choice of $r$ in \eqref{eq:rdef}, the quantity in parentheses above is at least $\frac{1}{2^\epsilon}$. It follows that $r^{j_0\epsilon}\leq 2^\epsilon s$, contradicting assumption (iii) of this lemma. 

\end{proof}

We now complete the proof of Theorem \ref{thm:main} by showing that the map $f\colon X^\epsilon \rightarrow \RR^n$ constructed in \eqref{eq:fdef} is Lipschitz light, with constants depending only on $\epsilon$, $n$, and $C$.
\begin{proof}[Proof of Theorem \ref{thm:main}]
The map $f$ is already Lipschitz by Lemma \ref{lem:lipschitz}. To prove that $f$ is Lipschitz light, it is enough to show that any $s$-path $P$ in $X^\epsilon$ with $\diam(f(P))\leq s$ must have $\diam^\epsilon(P) \lesssim_{\epsilon,n,C} s$. Thus, let $s>0$ and let $P$ be an $s$-path in $X^\epsilon$ with $\diam(f(P))\leq s$. We will show that
\begin{equation}\label{eq:diambound}
\diam(P) \leq 10cr s^{\frac{1}{\epsilon}},
\end{equation}
which of course implies that 
$$ \diam^\epsilon(P) \leq (10cr)^\epsilon s.$$
As $r$ and $c$ depend only on $\epsilon$, $n$, and $C$, this will prove the lemma.

Suppose to the contrary that $\diam(P)> 10cr s^{\frac{1}{\epsilon}}$. Choose $j_0\in\ZZ$ so that
\begin{equation}\label{eq:assumptioniii}
 2s^{\frac{1}{\epsilon}} < r^{j_0} \leq 2r s^{\frac{1}{\epsilon}} < \frac{1}{5c}\diam(P).
\end{equation}

First of all, we note that $P$ cannot be completely contained in $\cup_{B\in B^{j_0}_0} B$: By Lemma \ref{lem:cover}(i), distinct sets in $B^{j_0}_0$ are separated by $d^\epsilon$-distance at least $\frac{1}{2^\epsilon}r^{j_0\epsilon}>s$, so if the $s$-path $P$ were contained in this union it would have to be contained in a single set $B\in B^{j_0}_0$. But in that case, the $d$-diameter of $P$ would be bounded by $\diam(B)\leq c r^{j_0} \leq 2cr s^{\frac{1}{\epsilon}} < \diam(P)$, a contradiction.

Therefore, there is a point $x\in P$ that is not contained in $\cup_{B\in B^{j_0}_0} B$. By Lemma \ref{lem:psi}(iv), there is a $k\in\{1, \dots, n\}$ and a $B\in \B^{j_0}_k$ with $\psi_B(x)=1$. Since $x$ can be in at most one element of $\B^{j_0}_k$, we have
\begin{equation}\label{eq:assumptioniv}
\sum_{B\in\B^{j_0}_k} \psi_B(x) = 1.
\end{equation}

Now, $P$ must contain a point $z$ such that
$$ d(z,x) \geq \frac12 \diam(P) > 2cr^{j_0}.$$

By truncating $P$ (and reversing the order if necessary), we can find an $s$-path $Q = (x_1, x_2, \dots, x_m)$ (in the $d^\epsilon$-distance) with $x_1=x$, $x_m=z$, and each $x_i$ contained in $P$.
Let $x_b$ denote the first point in $Q$ such that $d(x_b,x)>2cr^{j_0}$.

Let $R$ be the sub-path $(x_{1}, x_{2}, \dots, x_{b-1})$. This is an $s$-path in the $d^\epsilon$-distance with the property that
\begin{equation}\label{eq:assumptionii}
 d(x_i,x) \leq 2cr^{j_0} \text{ for each } x_i\in R.
\end{equation}

Fix an arbitrary $i\in\{1,2,\dots, b-1\}$. We now apply Lemma \ref{lem:bound} with $x,j_0, k,s$ as above and $y=x_i\in R$. Assumption (i) of that lemma is satisfied because $x_i, x\in P$ and $\diam(f(P))\leq s$.  Assumptions (ii) through (iv) are verified in \eqref{eq:assumptionii}, \eqref{eq:assumptioniii}, and \eqref{eq:assumptioniv}, respectively. Therefore, each $x_i\in R$ is contained in $\cup_{B\in \B^{j_0}_k} B$. The sets in $\B^{j_0}_k$ are $\frac{1}{2^\epsilon} r^{j_0\epsilon}$-separated in the $d^\epsilon$-distance, and $R$ is an $s$-path in the $d^\epsilon$-distance with $s<\frac{1}{2^\epsilon}r^{j_0\epsilon}$ by \eqref{eq:assumptioniii}. Therefore, $R$ must be completely contained in a single set $B\in\B^{j_0}_k$. Therefore
$$ \diam(R) \leq \diam(B) \leq cr^{j_0}.$$ 

On the other hand,
$$ \diam(R) \geq d(x_{b-1},x_1) \geq d(x_b,x) - s^{\frac{1}{\epsilon}} \geq 2cr^{j_0} - \frac{1}{2}r^{j_0} \geq (2c-\frac{1}{2})r^{j_0} > cr^{j_0},$$
recalling that $c\geq 1$ in Lemma \ref{lem:cover}.

This is a contradiction. We have therefore proven \eqref{eq:diambound} and hence that $f$ is Lipschitz light with quantitative control on the constants.
\end{proof}

\section{Branched quasisymmetries}\label{sec:bqs}
In this section, we first present some basic facts about branched quasisymmetries due to Guo--Williams \cite{GuoWilliams}, and then prove Corollary \ref{cor:bcdim}.

\subsection{The pullback metric of Guo--Williams}\label{subsec:bqs}
In their study of branched quasisymmetries in \cite{GuoWilliams}, Guo and Williams showed that these mappings factor in a useful way by using a device called the pullback metric. As our assumptions and statements are slightly different than theirs, and we do not need all the notation they present, we present all the facts and proofs we need here. However, we emphasize that all the ideas in subsection \ref{subsec:bqs} are due originally to Guo and Williams, and we present it merely as necessary background.

Throughout subsection \ref{subsec:bqs}, we fix $X$ as a compact, bounded turning metric space and $f\colon X\rightarrow Y$ be a non-constant branched quasisymmetry. Define the \textit{pullback metric} on $X$ by
\begin{equation}\label{eq:df}
 d_f(x,y) = \inf\{ \diam(f(K)): K \text{ a continuum containing } x \text{ and } y\}.
\end{equation}
We write $\diam_f(E)$ for the diameter of a set $E$ in the metric $d_f$.

\begin{remark}\label{rmk:inf}
Under these assumptions, the infimum in \eqref{eq:df} is always achieved: Given $x,y\in X$, let $K_n$ be a sequence of continua with 
$$ \diam(f(K_n)) \rightarrow d_f(x,y).$$
Since $X$ is compact and $f$ is continuous, a subsequence of these continua converges in the Hausdorff sense to a continuum $K$ satisfying
\begin{equation}\label{eq:inf}
\diam(f(K)) = d_f(x,y).
\end{equation}
\end{remark}

\begin{lemma}
This $d_f$ is a metric on $X$ that is topologically equivalent to the original metric $d_X$.
\end{lemma}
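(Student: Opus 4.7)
The plan is to verify the metric axioms in turn---with positivity as the only substantive one---and then to handle topological equivalence using bounded turning on one side and the non-collapsing property of non-constant branched quasisymmetries on the other. Symmetry and non-negativity of $d_f$ are immediate from the definition. For the triangle inequality, given $x, y, z \in X$ and $\delta > 0$, I would pick continua $K_1 \ni x, y$ and $K_2 \ni y, z$ with $\diam(f(K_i))$ within $\delta$ of $d_f(x,y)$ and $d_f(y,z)$; then $K_1 \cup K_2$ is a continuum (both pieces contain $y$) joining $x$ and $z$, so
\[
d_f(x,z) \leq \diam(f(K_1 \cup K_2)) \leq \diam(f(K_1)) + \diam(f(K_2)),
\]
and letting $\delta \to 0$ yields the triangle inequality.

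The main obstacle is positivity: for $x \neq y$, I must show $d_f(x,y) > 0$. Suppose not. By Remark \ref{rmk:inf}, the infimum is achieved by some continuum $K \ni x, y$ with $\diam(f(K)) = 0$, so $f$ collapses $K$ to a single point $p = f(x)$. Since $f$ is non-constant, pick $z \in X$ with $f(z) \neq p$, and use bounded turning to produce a continuum $L \ni x, z$. Both $K$ and $L$ are non-trivial (as $x \neq y$ and $x \neq z$, respectively), and they meet at $x$. The branched quasisymmetric inequality (with the displayed typo on the right-hand side corrected to $\diam(f(E'))$) applied to $E = L$, $E' = K$ then gives
\[
\diam(f(L)) \leq \eta\!\left(\frac{\diam(L)}{\diam(K)}\right) \diam(f(K)) = 0,
\]
which contradicts $\{p, f(z)\} \subseteq f(L)$.

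For topological equivalence, I plan to show that the identity $(X, d_X) \to (X, d_f)$ is a homeomorphism. The easy direction: if $x_n \to x$ in $d_X$, bounded turning provides continua $K_n \ni x_n, x$ with $\diam(K_n) \lesssim d_X(x_n, x) \to 0$, and uniform continuity of $f$ on the compact space $X$ forces $\diam(f(K_n)) \to 0$, hence $d_f(x_n, x) \to 0$. For the reverse direction, suppose $d_f(x_n, x) \to 0$ but $d_X(x_n, x) \not\to 0$; after extracting a subsequence, $x_n \to y$ in $d_X$ with $y \neq x$. Nearly optimal continua $K_n \ni x_n, x$ with $\diam(f(K_n)) \to 0$ admit, by compactness of the hyperspace of continua in the compact metric space $X$, a Hausdorff-convergent subsequence to a continuum $K$, which necessarily contains both $x$ (since $x \in K_n$ for all $n$) and $y = \lim x_n$. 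Continuity of $f$ transfers the convergence to the images, giving $\diam(f(K)) = 0$, and then the non-collapsing argument from the positivity step applied to this $K$ yields a contradiction.
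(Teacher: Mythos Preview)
Your proof is correct. The positivity argument and the metric axioms match the paper's treatment essentially word for word (the paper also dispatches the triangle inequality in one line and proves non-collapsing by comparing a collapsed continuum against one on which $f$ is not constant).

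Where you diverge is in the topological equivalence. The paper handles both directions by comparing against a \emph{fixed} non-trivial reference continuum $K$ through $x$ and invoking the branched quasisymmetric inequality directly: for $x_n\to x$ in $d_X$, the inequality $\diam(f(K_n))\le \eta(\diam(K_n)/\diam(K))\,\diam(f(K))$ forces $\diam(f(K_n))\to 0$; for $x_n\to x$ in $d_f$, the reverse comparison $\diam(f(K))\le \diam(f(E_n))\,\eta(\diam(K)/\diam(E_n))$ forces $\diam(E_n)\to 0$. Your route instead leans on compactness twice---uniform continuity of $f$ for the forward direction, and a Blaschke-type selection in the hyperspace of continua for the converse---then recycles the non-collapsing step. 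Both are valid under the standing hypotheses; the paper's argument is a touch more direct and, notably, does not actually use compactness of $X$ in the topological-equivalence step, whereas yours does in an essential way. Your approach has the advantage of being more ``soft'' and not requiring one to spot the reference-continuum trick.
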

\begin{proof}
First of all, note that $f$ cannot collapse any non-trivial continuum to a point. Indeed, if it did then the bounded turning assumption and Definition \ref{def:bqs} would force $f$ to collapse all of $X$ to a single point, but we assumed that $f$ was non-constant.

It follows from this and Remark \ref{rmk:inf} that $d_f$ is positive definite. The symmetry and triangle inequality are immediate from the definition of $d_f$, and so $d_f$ is a metric on $X$.

Now suppose that $x_n \rightarrow x$ in $d_X$. Since $X$ is bounded turning, there are continua $K_n$ containing both $x_n$ and $x$ with 
$$ \diam(K_n) \lesssim d(x_n,x) \rightarrow 0 \text{ as } n\rightarrow\infty.$$
Let $K$ be any non-trivial fixed continuum containing $x$, and recall that by our first paragraph $f(K)$ is a non-trivial continuum as well. It follows from Definition \ref{def:bqs} that
$$ \diam(f(K_n)) \leq \diam(f(K))\cdot \eta(\diam(K_n)/\diam(K)) \rightarrow 0 \text{ as } n\rightarrow\infty.
$$
Hence $d_f(x_n,x)\rightarrow 0$.

Conversely, suppose that $x_n\rightarrow x$ in $d_f$. Then we have a sequence $E_n$ of continua containing $x_n$ and $x$, with $\diam(f(E_n)) \rightarrow 0$. Let $K$ be a non-trivial continuum containing $x$, as above. We then have
\begin{equation}\label{eq:lowerdiam}
 0<\diam(f(K))\leq \diam(f(E_n)) \eta(\diam(K)/\diam(E_n)). 
\end{equation}
We know that $\diam(f(E_n))\rightarrow 0$, so for the expression on the right-hand side of \eqref{eq:lowerdiam} to be bounded away from zero, we must have $\eta(\diam(K)/\diam(E_n))\rightarrow\infty$, or equivalently, $\diam(E_n)\rightarrow 0$. It follows that
$$ d(x_n, x)\leq \diam(E_n)\rightarrow 0.$$
\end{proof}

\begin{lemma}\label{lem:factor}
The map $f:(X,d_f)\rightarrow Y$ is $1$-Lipschitz and furthermore satisfies
\begin{equation}\label{eq:diam2}
 \diam(f(J)) = \diam_f(J)
\end{equation}
for all continua $J$ in $(X,d_f)$.

Moreover, the space $(X,d_f)$ is bounded turning with constant $1$.
\end{lemma}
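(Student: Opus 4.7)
My plan is to prove the three assertions of Lemma \ref{lem:factor} in the order they are stated, with each step feeding naturally into the next.

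\textbf{The $1$-Lipschitz bound.} This is essentially immediate from the definition of $d_f$. For $x,y\in X$ and any continuum $K$ containing both, one has $d_Y(f(x),f(y))\leq \diam(f(K))$ since $f(x),f(y)\in f(K)$. Taking the infimum over all such $K$ gives $d_Y(f(x),f(y))\leq d_f(x,y)$, which is the $1$-Lipschitz bound.

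\textbf{The diameter identity.} Let $J$ be a continuum in $(X,d_f)$. By the previous lemma (topological equivalence of $d_f$ and $d_X$), $J$ is also a continuum in $(X,d_X)$, which is the crucial point that lets us plug $J$ itself into the infimum defining $d_f$. The inequality $\diam(f(J))\leq \diam_f(J)$ follows from the $1$-Lipschitz property just proved, by taking supremum over pairs in $J$. For the reverse inequality, for any $x,y\in J$ the continuum $J$ itself is a candidate in the infimum defining $d_f(x,y)$, so $d_f(x,y)\leq \diam(f(J))$; taking supremum over $x,y\in J$ gives $\diam_f(J)\leq \diam(f(J))$.

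\textbf{Bounded turning with constant $1$.} Given $x,y\in X$, Remark \ref{rmk:inf} supplies a continuum $K$ in $(X,d_X)$ containing both $x$ and $y$ such that $\diam(f(K))=d_f(x,y)$. By topological equivalence $K$ is also a continuum in $(X,d_f)$, so the diameter identity just proved yields $\diam_f(K)=\diam(f(K))=d_f(x,y)$. This is exactly bounded turning with constant $1$.

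I do not expect any real obstacle: the whole argument rests on two structural facts that are already in place, namely that $d_f$ and $d_X$ are topologically equivalent (so the notion of ``continuum'' is unambiguous) and that the infimum in \eqref{eq:df} is attained (Remark \ref{rmk:inf}). The only place that requires a bit of care is making sure that a set which is a continuum in one metric is a continuum in the other, so that the same set may simultaneously serve as a test continuum for both $d_f$ and $\diam_f$; this is a purely topological observation but is what ties all three parts together.
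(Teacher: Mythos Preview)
Your proof is correct and follows essentially the same approach as the paper's. The only cosmetic difference is the order: the paper proves bounded turning before the diameter identity (observing directly that for the minimizing continuum $K$ and any $p,q\in K$ one has $d_f(p,q)\leq\diam(f(K))=d_f(x,y)$), whereas you prove the diameter identity first and then deduce bounded turning from it; both routes rest on the same two ingredients you identify, namely topological equivalence of $d_X$ and $d_f$ and attainment of the infimum in \eqref{eq:df}.
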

This is a slight modification of \cite[Lemma 5.7]{GuoWilliams}.
\begin{proof}

Consider $x,y\in X$. Let $K$ be as in \eqref{eq:inf}, so that
$$ d_f(x,y) = \diam(f(K)).$$
In that case,
$$ d(f(x),f(y)) \leq \diam(f(K)) = d_f(x,y),$$
and so $f$ is $1$-Lipschitz.

Continuing with $x$, $y$, and $K$ as above, note that if $p,q\in K$, then
$$ d_f(p,q) \leq \diam(f(K)) = d_f(x,y).$$
This proves that $\diam_f(K)\leq d_f(x,y)$, and so $(X,d_f)$ is $1$-bounded turning.

Now consider any continuum $J\subseteq (X,d_f)$. Since $f$ is $1$-Lipschitz on $(X,d_f)$, we have $\diam(f(J)) \leq \diam_f(J)$. On the other hand, if $x$ and $y$ are in $J$, then
$$ d_f(x,y) \leq \diam(f(J)),$$
and so $\diam_f(J) = \diam(f(J))$.

\end{proof}

\begin{lemma}\label{lem:bqsqs}
The identity map is a quasisymmetric homeomorphism from $(X,d_X)$ to $(X,d_f)$.
\end{lemma}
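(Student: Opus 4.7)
The plan is to exhibit a control function $\eta'$ with $d_f(x,a) \leq \eta'(d_X(x,a)/d_X(x,b)) d_f(x,b)$ for any distinct triple $x, a, b \in X$. Continuity of the identity in both directions is already supplied by the previous lemma, so all that remains is to establish the ratio-distortion inequality. The two main ingredients I would use are: bounded turning of $(X,d_X)$ (to produce an \emph{upper} bound for $d_f(x,a)$ via a short continuum), and Remark \ref{rmk:inf} (to produce a continuum realizing $d_f(x,b)$ so as to obtain a usable \emph{lower} bound for $\diam(K)$ for a competitor $K$ in the definition of $d_f$). The branched quasisymmetry of $f$ will then link the two estimates.

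More concretely, let $L$ be the bounded turning constant of $(X,d_X)$. Given distinct $x,a,b$, choose a continuum $E_a \ni x, a$ with $\diam(E_a) \leq L\, d_X(x,a)$, and use Remark \ref{rmk:inf} to pick a continuum $K_b \ni x, b$ with $\diam(f(K_b)) = d_f(x,b)$. Since $x \neq a$ and $x \neq b$, both $E_a$ and $K_b$ are non-trivial, and they intersect at $x$, so the branched quasisymmetry of $f$ applies:
$$ \diam(f(E_a)) \leq \eta\!\left( \frac{\diam(E_a)}{\diam(K_b)} \right) \diam(f(K_b)).$$
Because $K_b$ contains $x$ and $b$ we have $\diam(K_b) \geq d_X(x,b) > 0$, and $\diam(E_a) \leq L\, d_X(x,a)$, so monotonicity of $\eta$ yields
$$ d_f(x,a) \leq \diam(f(E_a)) \leq \eta\!\left( L\cdot \frac{d_X(x,a)}{d_X(x,b)} \right) d_f(x,b).$$
Thus $\eta'(t) := \eta(Lt)$ serves as a distortion function, and the identity is a quasisymmetric homeomorphism from $(X,d_X)$ to $(X,d_f)$.

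There is no serious obstacle here; the step to watch is the \emph{asymmetry} between the two bounds. It is tempting to use a bounded-turning continuum for $b$ as well, but that would only give an upper bound for $d_f(x,b)$, which is useless on the right-hand side. Invoking the optimal $K_b$ from Remark \ref{rmk:inf} is what makes the estimate go through, and this is also where compactness of $X$ is used.
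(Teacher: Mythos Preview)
Your proof is correct and follows essentially the same route as the paper: pick a bounded-turning continuum through $x$ and $a$ to bound $d_f(x,a)$ from above, pick an optimal continuum through $x$ and $b$ (via Remark~\ref{rmk:inf}) to realize $d_f(x,b)$, and compare via the branched-quasisymmetry inequality; the resulting control function is $\eta(Lt)$, exactly as in the paper. The only cosmetic difference is that the paper first records that the identity $(X,d_X)\to(X,d_f)$ is itself a branched quasisymmetry (using $\diam_f(E)=\diam(f(E))$ from Lemma~\ref{lem:factor}) and then applies that, whereas you apply the branched quasisymmetry of $f$ directly; these are the same computation.
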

This is contained in \cite[Propositions 6.47 and 6.48]{GuoWilliams}.
\begin{proof}
We first observe that the identity map from $(X,d_X)$ to $(X,d_f)$ is a branched quasisymmetry. Indeed, if $E$ and $E'$ are intersecting continua in $X$, then by the previous lemma and the fact that $f\colon (X, d_X) \rightarrow Y$ is a branched quasisymmetry, we have
$$ \diam_f(E) = \diam(f(E)) \leq \diam(f(E')) \eta\left(\frac{\diam_X(E)}{\diam_X(E')}\right) = \diam_f(E') \eta\left(\frac{\diam_X(E)}{\diam_X(E')}\right).$$

Next we argue as in Proposition 6.48 of \cite{GuoWilliams} to show that the identity is in fact a quasisymmetric homeomorphism. Let $x,y,z\in X$ be distinct points. As $(X,d_X)$ is $C$-bounded turning and $(X,d_f)$ is $1$-bounded turning, we may choose continua $E$ containing $x$ and $y$ and $E'$ containing $x$ and $z$ such that $\diam_X(E) \leq C d_X(x,y)$ and $\diam_f(E')= d_f(x,z)$.

Then
$$ \frac{d_f(x,y)}{d_f(x,z)} \leq \frac{\diam_f(E)}{\diam_f(E')} \leq \eta\left(\frac{\diam_X(E)}{\diam_X(E')}\right) \leq \eta\left(C\frac{d(x,y)}{d(x,z)}\right).$$
Thus, the identity from $(X,d_X)$ to $(X,d_f)$ is $\tilde{\eta}$-quasisymmetric, where $\tilde{\eta}(t) = \eta(Ct)$.

\end{proof}

We summarize the lemmas above as follows: if $X$ is compact and bounded turning and $f\colon X \rightarrow Y$ is branched quasisymmetric, then $f$ factors as
\begin{equation}\label{eq:factor}
 X \rightarrow (X, d_f) \rightarrow Y,
\end{equation}
where the first map is a quasisymmetric homeomorphism and the second map preserves the diameter of all continua.

\subsection{Relation to Lipschitz lightness and proof of Corollary \ref{cor:bcdim}}
We now work towards the proof of Corollary \ref{cor:bcdim}. First, we observe that, in the bounded turning setting, the property of ``preserving the diameters of continua'' is equivalent to Lipschitz lightness.
\begin{lemma}\label{lem:equiv}
Let $X$ be a bounded turning metric space and $Y$ another metric space. Let $f\colon X \rightarrow Y$. Then the following are equivalent:
\begin{enumerate}[(i)]
\item $f$ is Lipschitz light.
\item $\diam(f(K))\approx \diam(K)$ for every continuum $K\subseteq X$. 
\end{enumerate}
The constants in each item depend only on the constants in the other item and the bounded turning constant of $X$.
\end{lemma}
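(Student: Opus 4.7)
The plan is to prove the two implications separately.

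For (i) $\Rightarrow$ (ii), the upper bound $\diam(f(K)) \leq C\diam(K)$ is immediate from $C$-Lipschitzness of $f$. For the lower bound I would use the standard fact that any connected subset of a metric space is $r$-connected for every $r>0$, proved by a short clopen argument within the set. Given a continuum $K$ and any $\varepsilon > 0$, set $W = f(K)$ and $r = \diam(W) + \varepsilon$. Then $K \subseteq f^{-1}(W)$ is $r$-connected, so it lies inside a single $r$-component of $f^{-1}(W)$, and Lipschitz lightness yields $\diam(K) \leq Cr = C(\diam(f(K)) + \varepsilon)$. Letting $\varepsilon \to 0$ gives $\diam(K) \lesssim \diam(f(K))$.

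For (ii) $\Rightarrow$ (i), I would first extract Lipschitzness of $f$. Bounded turning provides, for every $x,y \in X$, a continuum $K$ containing both with $\diam(K) \lesssim d(x,y)$, and then hypothesis (ii) gives $d(f(x),f(y)) \leq \diam(f(K)) \lesssim \diam(K) \lesssim d(x,y)$. For the Lipschitz light condition, fix $r > 0$, a set $W \subseteq Y$ with $\diam(W) \leq r$, an $r$-component $A$ of $f^{-1}(W)$, and two points $x,y \in A$. Choose an $r$-path $x = x_0, x_1, \dots, x_k = y$ in $f^{-1}(W)$, and for each $i$ use bounded turning on $x_i, x_{i+1}$ to obtain a continuum $K_i$ containing both with $\diam(K_i) \lesssim r$. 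The union $K = \bigcup_{i=0}^{k-1} K_i$ is a continuum, since consecutive pieces $K_i$ and $K_{i+1}$ share the point $x_{i+1}$.

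The key step, and the only nontrivial one, is to bound $\diam(f(K))$ by a multiple of $r$. I would do this by observing that each $f(K_i)$ is a continuum of diameter $\lesssim r$ (applying (ii) to $K_i$) that contains $f(x_i) \in W$, and that $\diam(W) \leq r$; two arbitrary points of $f(K)$ can then be connected via their respective $f(K_i)$'s and a detour through $W$, yielding $\diam(f(K)) \lesssim r$. A second application of (ii), now to $K$, gives $d(x,y) \leq \diam(K) \lesssim \diam(f(K)) \lesssim r$, hence $\diam(A) \lesssim r$, which is the Lipschitz light property. The main obstacle is really just this chaining step; the remaining work is keeping track of the constants, which by construction depend only on the constant in (ii) and the bounded turning constant of $X$.
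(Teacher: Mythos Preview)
Your proposal is correct and follows essentially the same route as the paper's proof: both directions use the same architecture (for (i)$\Rightarrow$(ii), take $W=f(K)$ in the Lipschitz light definition; for (ii)$\Rightarrow$(i), deduce Lipschitzness from bounded turning, then join an $r$-path by small continua and apply (ii) to their union). The only minor variation is in how $\diam(f(K))$ is bounded in (ii)$\Rightarrow$(i): you apply (ii) to each $K_i$ and chain through $W$, whereas the paper notes that $K$ lies in a bounded-turning-controlled neighborhood of the $r$-path and invokes the already-established Lipschitz bound on $f$; both give the same estimate.
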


As noted in the introduction, the implication (i)$\Rightarrow$(ii) in Lemma \ref{lem:equiv} holds without the bounded turning assumption.

\begin{proof}
To prove that (i) implies (ii), suppose that $f$ is Lipschitz light with constant $L$. Let $K$ be a continuum in $X$.  Since $f$ is $L$-Lipschitz, $\diam(f(K))\leq L \diam(K)$. For the other inequality, we observe that since $K$ is connected, it is contained in a $\diam(f(K))$-component of $f^{-1}(K)$. It follows from the Lipschitz lightness of $f$ that
$$ \diam(K) \leq L\diam(f(K)),$$
and so (ii) holds.

Now we prove that (ii) implies (i). Suppose that property (ii) holds for $f$. First of all, $f$ must be Lipschitz: If $x,y\in X$, then by the bounded turning property there is a continuum $K$ containing both points with $\diam(K)\lesssim d(x,y)$. Therefore
$$ d(f(x),f(y)) \leq \diam(f(K)) \lesssim \diam(K) \lesssim d(x,y),$$
and so $f$ is Lipschitz.

Now let $A\subseteq Y$ have $\diam(A)\leq s$ for some $s\geq 0$. Let $P=(x_0, x_1, \dots, x_m)$ be an $s$-path in $f^{-1}(A)$. Join each pair of consecutive points $(x_i,x_{i+1})$ in $P$ by a continuum $K_i$ with 
$$\diam(K_i) \lesssim d(x_i,x_{i+1}) \leq s.$$
Let $K = \cup_{i=0}^{m-1} K_i$, which is a continuum containing $P$. Moreover, $K$ is contained in the closed $bs$-neighborhood of $P$, where $b$ is the bounded turning constant of $X$. Because of this, and the facts that $f$ is Lipschitz and $f(P)\subseteq A$, we have
$$ \diam(f(K)) \lesssim \diam(A) + s \lesssim s.$$
It follows that
$$ \diam(P) \leq \diam(K) \approx \diam(f(K)) \lesssim s.$$
Therefore, $f$ is Lipschitz light.
\end{proof}

\begin{proof}[Proof of Corollary \ref{cor:bcdim}]
Let $X$ be a compact, bounded turning metric space. Let 
$$ n=\dim_N(X)$$
and
$$ a =\inf\{ \dim_A(Y): \text{there is a non-constant branched quasisymmetry of } X \text{ onto } Y\}.$$
Either of these numbers may be infinite.

Assume first that $n<\infty$. Theorem \ref{thm:main} provides a Lipschitz light map from $X^\epsilon$ onto a subset $Y\subseteq \mathbb{R}^n$. By Lemma \ref{lem:equiv}, this map satisfies
$$ \diam(f(K))\approx \diam(K)^\epsilon \text{ for all continua } K\subseteq X,$$
and therefore $f$ is immediately seen to be a branched quasisymmetry. As $\dim_A(Y) \leq \dim_A(\mathbb{R}^n)=n$, this yields
$$ a \leq n.$$
As this inequality trivially holds also when $n=\infty$, we have shown that $a \leq n$ in general.

Now we aim for the reverse inequality: $a \geq n$. We may assume that $a<\infty$, otherwise the inequality is again trivial.

For any $t>0$, the definition of $a$ provides a space $Y$ with $\dim_A(Y)<a+t$ and a branched quasisymmetry $f$ from $X$ onto $Y$. Applying \eqref{eq:factor}, we factor $f$ as 
$$ X \rightarrow (X, d_f) \rightarrow Y,$$
where the first map is a quasisymmetry and the second preserves the diameter of continua. It follows by Lemmas \ref{lem:factor} and \ref{lem:equiv} that the second map is Lipschitz light.

Nagata dimension is a quasisymmetric invariant, so we have $\dim_N(X,d_f)=n$. Lipschitz light maps cannot decrease Nagata dimension (\cite[Lemma 4.1]{GCD}) and Assouad dimension bounds Nagata dimension from above (see \eqref{eq:nagassouad}). So we have
$$ n= \dim_N(X) = \dim_N(X,d_f) \leq \dim_N(Y) \leq \dim_A(Y)<a+t.$$
As $t>0$ was arbitrary, this completes the proof.

\end{proof}

\printbibliography

@article {Assouad,
    AUTHOR = {Assouad, Patrice},
     TITLE = {Plongements lipschitziens dans {${\bf R}^{n}$}},
   JOURNAL = {Bull. Soc. Math. France},
  FJOURNAL = {Bulletin de la Soci\'{e}t\'{e} Math\'{e}matique de France},
    VOLUME = {111},
      YEAR = {1983},
    NUMBER = {4},
     PAGES = {429--448},
      ISSN = {0037-9484},
   MRCLASS = {54E40 (54F45)},
  MRNUMBER = {763553},
MRREVIEWER = {S. J. Taylor},
       URL = {http://www.numdam.org/item?id=BSMF_1983__111__429_0},
}

@article {CK13_inverse,
    AUTHOR = {Cheeger, Jeff and Kleiner, Bruce},
     TITLE = {Realization of metric spaces as inverse limits, and
              bilipschitz embedding in {$L_1$}},
   JOURNAL = {Geom. Funct. Anal.},
  FJOURNAL = {Geometric and Functional Analysis},
    VOLUME = {23},
      YEAR = {2013},
    NUMBER = {1},
     PAGES = {96--133},
      ISSN = {1016-443X},
   MRCLASS = {46B85},
  MRNUMBER = {3037898},
MRREVIEWER = {Anthony Ross Weston},
       DOI = {10.1007/s00039-012-0201-8},
       URL = {https://doi-org.proxy.bsu.edu/10.1007/s00039-012-0201-8},
}

@article {GCD,
    AUTHOR = {David, Guy C.},
     TITLE = {On the {L}ipschitz dimension of {C}heeger-{K}leiner},
   JOURNAL = {Fund. Math.},
  FJOURNAL = {Fundamenta Mathematicae},
    VOLUME = {253},
      YEAR = {2021},
    NUMBER = {3},
     PAGES = {317--358},
      ISSN = {0016-2736},
   MRCLASS = {30L99 (30L05 53C23 54F45)},
  MRNUMBER = {4205978},
MRREVIEWER = {Scott Robert Zimmerman},
       DOI = {10.4064/fm776-8-2020},
       URL = {https://doi-org.proxy.bsu.edu/10.4064/fm776-8-2020},
}

@misc{GuoWilliams,
  doi = {10.48550/ARXIV.1611.02478},
  
  url = {https://arxiv.org/abs/1611.02478},
  
  author = {Guo, Chang-Yu and Williams, Marshall},
  
  keywords = {Complex Variables (math.CV), Metric Geometry (math.MG), FOS: Mathematics, FOS: Mathematics, 53C17, 30C65, 58C06, 58C25},
  
  title = {Geometric function theory: the art of pullback factorization},
  
  publisher = {arXiv},
  
  year = {2016},
  
  copyright = {arXiv.org perpetual, non-exclusive license}
}

@article {Freeman,
    AUTHOR = {Freeman, David M.},
     TITLE = {Weak quasicircles have {L}ipschitz dimension 1},
   JOURNAL = {Ann. Fenn. Math.},
  FJOURNAL = {Annales Fennici Mathematici},
    VOLUME = {47},
      YEAR = {2022},
    NUMBER = {1},
     PAGES = {283--303},
      ISSN = {2737-0690},
   MRCLASS = {30C65 (30L05 54F45)},
  MRNUMBER = {4366417},
       DOI = {10.54330/afm.113453},
       URL = {https://doi-org.proxy.bsu.edu/10.54330/afm.113453},
}

@misc{FG1,
  doi = {10.48550/ARXIV.2204.05464},
  
  url = {https://arxiv.org/abs/2204.05464},
  
  author = {Freeman, David M. and Gartland, Chris},
  
  keywords = {Metric Geometry (math.MG), Functional Analysis (math.FA), FOS: Mathematics, FOS: Mathematics, 51F30 (30L05, 28A15, 28A78, 46B20)},
  
  title = {Lipschitz Functions on Quasiconformal Trees},
  
  publisher = {arXiv},
  
  year = {2022},
  
  copyright = {arXiv.org perpetual, non-exclusive license}
}

@misc{FG2,
  doi = {10.48550/ARXIV.2212.08707},
  
  url = {https://arxiv.org/abs/2212.08707},
  
  author = {Freeman, David M. and Gartland, Chris},
  
  keywords = {Functional Analysis (math.FA), Metric Geometry (math.MG), FOS: Mathematics, FOS: Mathematics},
  
  title = {Lipschitz Functions on Unions and Quotients of Metric Spaces},
  
  publisher = {arXiv},
  
  year = {2022},
  
  copyright = {arXiv.org perpetual, non-exclusive license}
}

@book {Heinonen,
    AUTHOR = {Heinonen, Juha},
     TITLE = {Lectures on analysis on metric spaces},
    SERIES = {Universitext},
 PUBLISHER = {Springer-Verlag, New York},
      YEAR = {2001},
     PAGES = {x+140},
      ISBN = {0-387-95104-0},
   MRCLASS = {30C65 (28A75 28A78 46E35)},
  MRNUMBER = {1800917},
MRREVIEWER = {Christopher Bishop},
       DOI = {10.1007/978-1-4613-0131-8},
       URL = {https://doi-org.proxy.bsu.edu/10.1007/978-1-4613-0131-8},
}

@article {LS,
    AUTHOR = {Lang, Urs and Schlichenmaier, Thilo},
     TITLE = {Nagata dimension, quasisymmetric embeddings, and {L}ipschitz
              extensions},
   JOURNAL = {Int. Math. Res. Not.},
  FJOURNAL = {International Mathematics Research Notices},
      YEAR = {2005},
    NUMBER = {58},
     PAGES = {3625--3655},
      ISSN = {1073-7928},
   MRCLASS = {53C23 (30C65 53C21 54F45)},
  MRNUMBER = {2200122},
MRREVIEWER = {Jeremy T. Tyson},
       DOI = {10.1155/IMRN.2005.3625},
       URL = {https://doi-org.proxy.bsu.edu/10.1155/IMRN.2005.3625},
}

@article {LeDonneRajala,
    AUTHOR = {Le Donne, Enrico and Rajala, Tapio},
     TITLE = {Assouad dimension, {N}agata dimension, and uniformly close
              metric tangents},
   JOURNAL = {Indiana Univ. Math. J.},
  FJOURNAL = {Indiana University Mathematics Journal},
    VOLUME = {64},
      YEAR = {2015},
    NUMBER = {1},
     PAGES = {21--54},
      ISSN = {0022-2518},
   MRCLASS = {54F45 (53C17 53C23 54E35)},
  MRNUMBER = {3320519},
MRREVIEWER = {Takahisa Miyata},
       DOI = {10.1512/iumj.2015.64.5469},
       URL = {https://doi-org.proxy.bsu.edu/10.1512/iumj.2015.64.5469},
}

@misc{LindquistPankka,
  doi = {10.48550/ARXIV.1911.12680},
  
  url = {https://arxiv.org/abs/1911.12680},
  
  author = {Lindquist, Jeff and Pankka, Pekka},
  
  keywords = {Metric Geometry (math.MG), Complex Variables (math.CV), FOS: Mathematics, FOS: Mathematics, Primary 30L10, Secondary 30C65},
  
  title = {Vertical quasi-isometries and branched quasisymmetries},
  
  publisher = {arXiv},
  
  year = {2019},
  
  copyright = {arXiv.org perpetual, non-exclusive license}
}

@book {MackayTyson,
    AUTHOR = {Mackay, John M. and Tyson, Jeremy T.},
     TITLE = {Conformal dimension},
    SERIES = {University Lecture Series},
    VOLUME = {54},
      NOTE = {Theory and application},
 PUBLISHER = {American Mathematical Society, Providence, RI},
      YEAR = {2010},
     PAGES = {xiv+143},
      ISBN = {978-0-8218-5229-3},
   MRCLASS = {30L10 (28A78 28A80 37F35)},
  MRNUMBER = {2662522},
MRREVIEWER = {Leonid V. Kovalev},
       DOI = {10.1090/ulect/054},
       URL = {https://doi-org.proxy.bsu.edu/10.1090/ulect/054},
}

@book {Nagata,
    AUTHOR = {Nagata, Jun-iti},
     TITLE = {Modern dimension theory},
    SERIES = {Bibliotheca Mathematica, Vol. VI},
      NOTE = {Edited with the cooperation of the ``Mathematisch Centrum''
              and the ``Wiskundig Genootschap'' at Amsterdam},
 PUBLISHER = {Interscience Publishers John Wiley \& Sons, Inc., New York},
      YEAR = {1965},
     PAGES = {iv+259},
   MRCLASS = {54.70},
  MRNUMBER = {0208571},
MRREVIEWER = {S. Marde\v{s}i\'{c}},
}

@article {Semmes,
    AUTHOR = {Semmes, Stephen},
     TITLE = {On the nonexistence of bi-{L}ipschitz parameterizations and
              geometric problems about {$A_\infty$}-weights},
   JOURNAL = {Rev. Mat. Iberoamericana},
  FJOURNAL = {Revista Matem\'{a}tica Iberoamericana},
    VOLUME = {12},
      YEAR = {1996},
    NUMBER = {2},
     PAGES = {337--410},
      ISSN = {0213-2230},
   MRCLASS = {30C65 (42B20)},
  MRNUMBER = {1402671},
MRREVIEWER = {Guy David},
       DOI = {10.4171/RMI/201},
       URL = {https://doi.org/10.4171/RMI/201},
}

\end{document}